\documentclass[12pt,reqno]{amsart}

\usepackage[margin=1in]{geometry}
\usepackage{amsmath,amsfonts,amssymb}
\usepackage{graphics,graphicx,color}
\usepackage{cite}
\usepackage{hyperref}
\usepackage{txfonts}

\theoremstyle{plain}
\newtheorem{theorem}{Theorem}[section]
\newtheorem*{theorem*}{Theorem}

\newtheorem{lemma}{Lemma}[section]
\newtheorem{corollary}{Corollary}[section]
\theoremstyle{remark}
\newtheorem{remark}{Remark}[section]

\newcommand{\e}{^\varepsilon}
\newcommand{\eps}{{\varepsilon}}
\newcommand{\ds}{\displaystyle}

\newcommand{\M}{\mathcal{M}}
\newcommand{\U}{\mathrm{U}}

\renewcommand{\a}{\alpha}

\newcommand{\E}{\mathrm{E}}
\renewcommand{\H}{\mathrm{H}}

\newcommand{\cupl}{\bigcup\limits}
\newcommand{\supp}{\mathrm{supp}}
\newcommand{\suml}{\sum\limits}
\newcommand{\intl}{\int\limits}
\newcommand{\liml}{\lim\limits}

\newcommand{\minl}{\min\limits}

\newcommand{\A}{{A}}

\renewcommand{\phi}{\varphi}

\begin{document}

\footnotetext[1]{Department of Mathematics, Karlsruhe Institute of Technology, Germany}

\footnotetext[2]{Mathematical Division, B. Verkin Institute for Low
Temperature Physics and Engineering of the National Academy of
Sciences of Ukraine}

\footnotetext[3]{\noindent Correspondence to: Department of Mathematics, Karlsruhe Institute of Technology, Kaiserstrasse 89-93, Karlsruhe 76133, Germany

 E-mail: andrii.khrabustovskyi@kit.edu, khruslov@ilt.kharkov.ua}

\noindent {\huge {Gaps in the spectrum of the Neumann Laplacian generated by a system of
periodically distributed trap}} \bigskip

\noindent{\large {Andrii Khrabustovskyi\footnotemark[1]\ \footnotemark[2]\ \footnotemark[3],\quad Evgeni Khruslov\footnotemark[2]}}\medskip

\
\\
{\small\textbf{Abstract.} The article deals with a convergence of the spectrum of the Neumann Laplacian in a periodic unbounded domain $\Omega\e$ depending on a small parameter $\eps>0$. The domain has the form $\Omega\e=\mathbb{R}^n\setminus S\e$, where $S\e$ is an $\eps\mathbb{Z}^n$-periodic family of trap-like screens. We prove that for an arbitrarily large $L$  the spectrum has just one gap in $[0,L]$ when $\eps$ small enough, moreover when $\eps\to 0$ this gap converges to some interval whose edges can be controlled by a suitable choice of geometry of the screens. An application to the theory of $2D$-photonic crystals is discussed.
\medskip

\noindent Keywords: periodic domain, Neumann Laplacian, spectrum, gaps}
\\ \\

\section*{Introduction}

It is well-known (see, e.g., \cite{Brown,Kuchment,Reed}) that the
spectrum of self-adjoint periodic differential operators has a
band structure, i.e. it is a union of compact intervals called
\textit{bands}.  The neighbouring bands may overlap, otherwise we
have a \textit{gap} in the spectrum (i.e. an open interval that does not
belong to the spectrum but its ends belong to it). In general the existence of
spectral gaps is not guaranteed.

For applications it is interesting to construct the operators with non-void spectral gaps since their presence is important for the description of wave processes which are governed by differential operators
under consideration. Namely, if the wave frequency belongs to a gap, then the corresponding wave cannot propagate in the medium without attenuation. This feature is a dominant requirement for so-called photonic
crystals which are materials with periodic dielectric structure attracting much attention in recent years (see, e.g.,
\cite{Joannopoulos,Dorfler,Kuchment_PC}).

In the present work we derive the effect of opening of spectral gaps for the Laplace operator in $\mathbb{R}^n$ ($n\geq 2$) perforated by a family of periodically distributed traps on which we pose the Neumann boundary conditions. The traps are made from infinitely thin screens (see Fig. \ref{fig1}). In the case $n=2$ this operator describes the propagation of the $H$-polarized electro-magnetic waves in the dielectric medium containing a system of perfectly conducting trap-like screens (see the remark in Section \ref{sec3}).

We describe the problem and main result more precisely. Let $\eps>0$ be a small parameter. Let $S\e=\cupl_{i\in \mathbb{Z}^n}S_i\e$ be a union of periodically distributed screens $S_i\e$ in $\mathbb{R}^n$ ($n\geq 2$). Each screen $S_i\e$ is an $(n-1)$-dimensional surface obtained by removing of a small spherical hole from the boundary of a $n$-dimensional cube. It is supposed that the distance between the screens is equal to $\eps$, the length of their edges is equal to $b\eps$, while the radius of the holes is equal to $d\eps^{n\over n-2}$ if $n>2$ and $e^{-1/d\eps^2}$ if $n=2$. Here $d\in (0,\infty)$,  $b\in (0,1)$ are constants.

\begin{figure}[h]
  \begin{center}
   \begin{picture}(0,0)
    \includegraphics{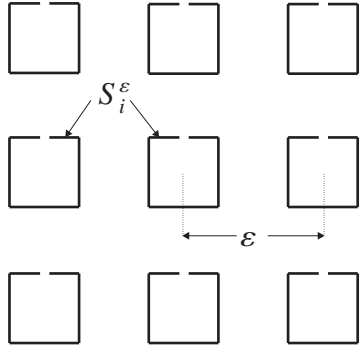}
   \end{picture}
    \setlength{\unitlength}{4144sp}%
    \begin{picture}(2000,2000)(0,0)
      \put(500, 1450){$S_i\e$}
      \put(1350, 600){$\eps$}
    \end{picture}
  \end{center}
  \caption{The system of screens $S_i\e$}\label{fig1}
\end{figure}

By $\mathcal{A\e}$ we denote the Neumann Laplacian in the domain $\mathbb{R}^n\setminus S\e$. Our goal is to describe the behaviour of its spectrum $\sigma(\mathcal{A}\e)$ as $\eps\to 0$.

The main result of this work is as follows (see Theorem \ref{th1}). For an arbitrarily large $L$ the spectrum $\sigma(\mathcal{A}\e)$ has just one gap in $[0,L]$ when $\eps$ is small enough. When $\eps\to 0$ this gap converges to some interval
$(\sigma,\mu)$ depending in simple manner on the coefficients $d$ and $b$. Moreover (see Corollary \ref{cor})  with a suitable choice of $d$ and $b$ this interval can be made equal to an arbitrary preassigned interval in $(0,\infty)$. 

The possibility of opening of spectral gaps by means of a periodic perforation was also investigated in \cite{Nazarov}. Here the authors studied the spectrum of the Neumann Laplacian  in  $\mathbb{R}^2$ perforated by $\mathbb{Z}^2$-periodic family of circular holes. It was proved that the gaps open up when the diameter $d$ of holes is close enough to the distance between their centres (which is equal to $1$). However, the structure of the spectrum in \cite{Nazarov} differs essentially from that one in the present paper. Namely, when $d\to 1$ the spectrum converges (uniformly on compact intervals) to a sequence of points.

Various examples of scalar periodic elliptic operators in the entire space with periodic coefficients were presented in \cite{Hoang,Figotin1,Figotin2,Figotin3,DavHar,Hempel,Zhikov,Khrab1,Khrab2,Green,Post}. In these works spectral gaps are the result of high contrast in (some of) the coefficients of the operator.

The outline of the paper is as follows. In Section \ref{sec1} we describe precisely the operator $\mathcal{A}\e$ and formulate the main result of the paper (Theorem \ref{th1}) describing the behaviour of $\sigma(\mathcal{A}\e)$ as $\eps\to 0$. Theorem \ref{th1} is proved in Section \ref{sec2}. Finally, in Section \ref{sec3} on a formal level of rigour we discuss the applications to the theory of  $2D$ photonic crystals.


\section{\label{sec1}Setting of the problem and the main result}

Let $n\in \mathbb{N}\setminus\{1\}$ and let $\eps>0$. We introduce the following sets:
\begin{itemize}
\item $B=\left\{x=(x_1,\dots,x_n)\in \mathbb{R}^n:\ -b/2<x_i<b/2,\ \forall i\right\}$, where $b\in (0,1)$ is a constant.

\item $D\e=\left\{x\in \partial B:\ |x-x^0|<d\e\right\}$, where $x^0=(0,0,\dots,0,b/2)$ and $d\e$ is defined by the following formula:
\begin{gather}\label{d}
d\e=
\begin{cases}
\ds d\eps^{2\over n-2},& n>2,\\
\ds \eps^{-1}\exp\left(-{1\over d\eps^{2}}\right),& n=2.
\end{cases}
\end{gather}
Here $d>0$ is a constant. It is supposed that $\eps$ is small enough so that
$d\e<b/2$

\item $S\e=\partial B\setminus D\e$
\end{itemize}
For $i\in \mathbb{Z}^n$ we set:
\begin{gather}\label{S}
S_i\e=\eps (S\e+i)
\end{gather}
and (see Fig. \ref{fig1})
\begin{gather}\label{Omega}
\Omega\e=\mathbb{R}^n\setminus\left(\cupl_{i\in \mathbb{Z}^n}\ S_i\e\right)
\end{gather}

Now we define precisely the Neumann Laplacian in $\Omega\e$. We
denote by $\eta\e[u,v]$ the sesquilinear form in
${L_{2}(\Omega\e)}$ which is defined by the formula
\begin{gather}\label{form1}
\eta\e[u,v]=\intl_{\Omega\e}\left(\nabla
u,\nabla \bar v\right)dx
\end{gather}
and the definitional domain $\mathrm{dom}(\eta\e)=H^1(\Omega\e)$.
Here $\left(\nabla u,\nabla
\overline{v}\right)=\ds\suml_{k=1}^n{\partial u\over\partial
x_k}{\partial \overline{v}\over\partial x_k}$. The form
$\eta\e[u,v]$ is densely defined closed and positive. Then (see,
e.g., \cite[Chapter 6, Theorem 2.1]{Kato}) there exists the unique
self-adjoint and positive operator $\mathcal{A}\e$ associated with
the form $\eta\e$, i.e.
\begin{gather}\label{eta-a}
(\mathcal{A}\e u,v)_{L_2(\Omega\e)}= \eta\e[u,v],\quad\forall u\in
\mathrm{dom}(\mathcal{A}\e),\ \forall  v\in \mathrm{dom}(\eta\e)
\end{gather}
{It follows from (\ref{eta-a}) that $\mathcal{A}\e u=-\Delta u$ in
the generalized sense. Using a standard regularity theory (see,
e.g, \cite[Chapter 5]{Taylor}) it is easy to show that each
$u\in\mathrm{dom}(\mathcal{A})$ belongs to $H_{loc}^2(\Omega\e)$, furthermore $\ds\left.{\partial u\over\partial
n}\right|_{\Gamma}=0$ for any smooth $\Gamma\subset
\partial\Omega\e$.}

We denote by $\sigma(\mathcal{A}\e)$ the spectrum of
$\mathcal{A}\e$. To describe the behaviour of
$\sigma(\mathcal{A}\e)$ as $\eps\to 0$ we need some additional
notations.

In the case $n>2$ we denote by $\mathrm{cap}(T)$ the capacity of
the disc 
$$T=\left\{x=(x_1,\dots,x_n)\in \mathbb{R}^n:\ |x|<1,\ x_n=0\right\}$$
Recall (see, e.g, \cite{Landkof}) that it is defined by
$$\mathrm{cap}(T)=\inf_w \intl_{\mathbb{R}^n}|\nabla w|^2 dx$$
where the infimum is taken over smooth and compactly supported in
$\mathbb{R}^n$ functions equal to $1$ on $T$.

We set
\begin{gather}\label{sigmamu}
\sigma=\begin{cases}\ds{\mathrm{cap}(T) d^{n-2}\over 4b^n},&n>2,\\
\ds{\pi d\over 2b^2},&n=2,
\end{cases}\qquad \mu={\sigma\over 1-b^n}
\end{gather}
It is clear that $\sigma<\mu$.

The behaviour of $\sigma(\mathcal{A}\e)$ as $\eps\to 0$ is
described by the following theorem.


\begin{theorem}\label{th1}
Let $L$ be an arbitrary number satisfying $L>\mu$. Then the spectrum of the
operator $\mathcal{A}\e$ has the following structure in
$[0,L]$ when $\eps$ is small enough:
\begin{gather}\label{main1}
\sigma(\mathcal{A}\e)\cap [0,L]=[0,L]\setminus
(\sigma\e,\mu\e)
\end{gather}
where  the interval $(\sigma\e,\mu\e)$ satisfies
\begin{gather}\label{main2}
\liml_{\eps\to 0}\sigma\e=\sigma,\quad \liml_{\eps\to 0}\mu\e=\mu
\end{gather}
\end{theorem}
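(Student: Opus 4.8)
The plan is to use Floquet--Bloch theory together with a homogenization-type analysis of the model cell problem. Since $\mathcal{A}\e$ is $\eps\mathbb{Z}^n$-periodic, a Gelfand transform decomposes it into a direct integral $\int^\oplus_{\theta\in\widehat{\square}}\mathcal{A}\e(\theta)\,d\theta$ of operators on the rescaled periodicity cell $Y\e=\eps Y\setminus S\e$ (where $Y=(-1/2,1/2)^n$) with $\theta$-quasiperiodic boundary conditions on $\partial(\eps Y)$ and Neumann conditions on the screen $S\e$. Each $\mathcal{A}\e(\theta)$ has purely discrete spectrum $\lambda_1\e(\theta)\le\lambda_2\e(\theta)\le\dots$, and $\sigma(\mathcal{A}\e)=\bigcup_k\{\lambda_k\e(\theta):\theta\in\widehat{\square}\}$, so a gap in $[0,L]$ appears precisely where the band functions leave a window empty. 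The geometry is engineered so that the screen $S\e$ almost encloses a ``trap'' region (the interior of the cube $\eps B$) which communicates with the exterior only through the tiny hole $\eps D\e$; the capacity of that hole is tuned (this is exactly why $d\e$ has the stated $\eps$-dependence) so that in the limit $\eps\to 0$ it contributes a finite, nonzero coupling constant $\sigma$. First I would establish, via a rescaling $x\mapsto x/\eps$ and the capacity estimate for small holes (the $n>2$ power $\eps^{2/(n-2)}$ and the $n=2$ logarithmic scaling are precisely the critical regimes making $\eps^{-2}\cdot(\text{energy through the hole})\to\mathrm{const}$), that the quadratic form $\eta\e(\theta)$ converges in a suitable two-scale / $\Gamma$-convergence sense to a limit form acting on a two-component space: one component is a constant value $c_{\mathrm{in}}$ on the trap, the other is an $H^1$ quasiperiodic function $u_{\mathrm{out}}$ on $Y\setminus B$, coupled by a term proportional to $|c_{\mathrm{in}}-u_{\mathrm{out}}(x^0)|^2$ with coefficient fixed by $\mathrm{cap}(T)$ (resp.\ $\pi$ for $n=2$).

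Next I would identify the spectrum of the limit operator explicitly. The limit operator on the cell is essentially a rank-one (finite-dimensional) perturbation: the ``outside'' part, because the Neumann Laplacian on $Y\setminus \overline{B}$ with quasiperiodic conditions has lowest eigenvalue $0$ only at $\theta=0$ and otherwise has spectrum bounded below, combined with the scalar trap mode, produces in the limit just two relevant low-lying branches whose union over $\theta$ fills $[0,\sigma]$ and $[\mu,\infty)$ — the value $\mu=\sigma/(1-b^n)$ arising from the volume ratio $|Y\setminus B|/|Y|=1-b^n$ in the effective mass of the coupled oscillator. Concretely, the dispersion relation for the two lowest branches reduces, after integrating out the outside field, to a scalar equation whose solutions sweep out exactly $[0,\sigma]\cup[\mu,\infty)$ as $\theta$ ranges over the Brillouin zone, while all higher branches start above $L$ (they scale like $\eps^{-2}$ times eigenvalues of the fixed cell, hence diverge). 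This gives the limiting spectrum $[0,\infty)\setminus(\sigma,\mu)$, and intersecting with $[0,L]$ yields the right-hand side of \eqref{main1}.

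The final step is to upgrade the spectral convergence of $\mathcal{A}\e(\theta)$ to its limit into the Hausdorff convergence of $\sigma(\mathcal{A}\e)\cap[0,L]$ to $([0,\infty)\setminus(\sigma,\mu))\cap[0,L]$, uniformly in $\theta$, and to extract the endpoints $\sigma\e,\mu\e$. I would prove (i) upper bounds on the band functions $\lambda_k\e(\theta)$ via explicit trial functions (a function constant on the trap, a quasiperiodic competitor outside, glued through the hole using the capacity minimizer rescaled to size $d\e$), giving $\limsup$ of the gap endpoints controlled from inside; and (ii) lower bounds via a compactness argument showing any sequence of quasimodes with energy in a fixed compact set has a two-scale limit solving the limit problem, so no spectrum can persist in the open interval $(\sigma,\mu)$ asymptotically. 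Uniformity in $\theta$ follows because the estimates depend on $\theta$ only through the (compact) Brillouin zone in a continuous way. The main obstacle I expect is step (ii): proving the \emph{lower} bound, i.e.\ that no spurious spectrum survives in the gap — this requires a careful quantitative Poincar\'e/trace inequality on the perforated cell controlling $\|u - \text{(mean over trap)}\|$ on the trap by the Dirichlet energy, with a constant that degenerates at exactly the capacity rate, together with a uniform (in $\eps$ and $\theta$) spectral gap between the two low branches and the rest of the spectrum; handling the $n=2$ logarithmic capacity, where the relevant estimates are only marginally true, is the most delicate part.
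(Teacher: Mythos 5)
Your skeleton (Floquet--Bloch reduction to cell problems with quasimomentum $\theta$, capacity-scaled trial functions for upper bounds, a compactness argument to exclude spectrum in the gap) is broadly the strategy of the paper, and your heuristic constants are right: $\tfrac14\mathrm{cap}(T)d^{n-2}$ divided by the trap volume $b^n$ gives $\sigma$, and the two-level system with masses $b^n$ and $1-b^n$ gives $\mu=\sigma/(1-b^n)$. However, the central analytic claim your argument rests on is not correct as stated. To see spectrum of $\mathcal{A}\e$ in a fixed window $[0,L]$ you must look at $\eps^{-2}\lambda_k^{\theta,\eps}$, i.e.\ at the form $\eps^{-2}\|\nabla u\|^2_{L_2(Y\e)}$ with fixed $L_2$-normalization; boundedness of this Rayleigh quotient forces $\|\nabla u\|_{L_2(F)}=O(\eps)$, so for \emph{fixed} $\theta$ the outside component is asymptotically a constant, and equal to $0$ whenever $\theta\neq(1,\dots,1)$ (this is exactly the content of Lemma \ref{lm24+} and inequality \eqref{ineq5}). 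Hence the fixed-$\theta$ limit is \emph{not} your two-component form with an $H^1$ outside field coupled through $|c_{\mathrm{in}}-u_{\mathrm{out}}(x^0)|^2$ (point evaluation is in any case not defined on $H^1$ for $n\ge2$); the pointwise-in-$\theta$ limits of the band functions inside $[0,L]$ are only the three values $0,\sigma,\mu$: one gets $0$ and $\mu$ at $\theta=(1,\dots,1)$ and $\sigma$ for every $\theta\neq(1,\dots,1)$, all other branches escaping to $+\infty$. The bands fill the full intervals $[0,\sigma\e]$ and $[\mu\e,\,\cdot\,]$ only because this convergence is non-uniform near $\theta=(1,\dots,1)$ (equivalently, because the physical Brillouin zone $[-\pi/\eps,\pi/\eps)^n$ expands). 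So there is no limiting dispersion relation at fixed $\theta$ that ``sweeps out $[0,\sigma]\cup[\mu,\infty)$'', and your final uniformity claim (``estimates depend on $\theta$ only through the compact Brillouin zone in a continuous way'') cannot hold: the limit of $\eps^{-2}\lambda_1^{\theta,\eps}$ is discontinuous at $\theta=(1,\dots,1)$.

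This gap infects both halves of your last step: excluding spectrum in $(\sigma,\mu)$ must handle sequences $\theta_\eps\to(1,\dots,1)$ at arbitrary rates, which is precisely where your proposed limit problem degenerates, and showing that the spectrum actually covers all of $[0,\sigma\e]\cup[\mu\e,L]$ (exactly one gap, no spurious ones) cannot be read off a fixed-$\theta$ limit object. The paper avoids all of this by Dirichlet--Neumann bracketing, $\lambda_k^{N,\eps}\le\lambda_k^{\theta,\eps}\le\lambda_k^{D,\eps}$: it then suffices to prove (a) $\eps^{-2}\lambda_1^{D,\eps}\to\sigma$ and $\eps^{-2}\lambda_1^{\theta,\eps}\to\sigma$ for $\theta\neq(1,\dots,1)$, (b) $\eps^{-2}\lambda_2^{N,\eps}\to\mu$ and $\eps^{-2}\lambda_2^{\theta,\eps}\to\mu$ at $\theta=(1,\dots,1)$, and (c) convergence of the cell eigenvalues to those of the \emph{decoupled} limit operators on $F$ and $B$, so that after division by $\eps^2$ every other band edge leaves $[0,L]$; continuity of $\theta\mapsto\lambda_k^{\theta,\eps}$ then makes each band an interval and yields \eqref{main1}--\eqref{main2}. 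If you want to keep your route, you must either prove analogues of (a)--(c) directly (your capacity trial functions give the upper bounds; the matching lower bounds require the quantitative Friedrichs/Poincar\'e-type estimates of the paper rather than a soft $\Gamma$-limit), or set up a genuine double-porosity two-scale limit in the macroscopic quasimomentum $k$ with $\theta=e^{i\eps k}$, which is considerably heavier than the bracketing argument.
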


\begin{corollary} \label{cor}
For an arbitrary interval $(\sigma,\mu)\subset (0,\infty)$ 
there is a family $\{\Omega\e\}_{\eps}$ of periodic unbounded domains in $\mathbb{R}^n$ such that for an arbitrary number $L$ satisfying $L>\mu$  the spectrum of the corresponding Neumann Laplacian $\mathcal{A}\e$ has just one gap in $[0,L]$ when $\eps$ is small enough and this gap converges to the interval $(\sigma,\mu)$ as $\eps\to 0$.
\end{corollary}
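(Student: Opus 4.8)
The plan is to solve the two defining relations in (\ref{sigmamu}) backwards: from the prescribed endpoints one reads off geometric constants $b\in(0,1)$ and $d\in(0,\infty)$ that realise them, and then one appeals to Theorem \ref{th1}. Write the target interval as $(\sigma_*,\mu_*)$ with $0<\sigma_*<\mu_*$. By Theorem \ref{th1}, for the family $\{\Omega\e\}$ defined in (\ref{S})--(\ref{Omega}) with constants $b,d$ and for any $L>\mu$, the set $\sigma(\mathcal{A}\e)\cap[0,L]$ has exactly one gap $(\sigma\e,\mu\e)$, and $\sigma\e\to\sigma$, $\mu\e\to\mu$ as $\eps\to0$, where $\sigma,\mu$ are the quantities given by (\ref{sigmamu}). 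Hence it suffices to choose $b$ and $d$ so that $\sigma=\sigma_*$ and $\mu=\mu_*$.

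First I would fix $b$. The relation $\mu=\sigma/(1-b^n)$ forces $b^n=1-\sigma_*/\mu_*$; since $0<\sigma_*<\mu_*$, the right-hand side lies in $(0,1)$, so $b:=(1-\sigma_*/\mu_*)^{1/n}\in(0,1)$ is a legitimate choice of edge length.

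With $b$ now fixed, I would determine $d$ from the first line of (\ref{sigmamu}). For $n=2$ the assignment $d\mapsto \pi d/(2b^2)$ is a linear bijection of $(0,\infty)$ onto itself, and for $n>2$, since $\mathrm{cap}(T)>0$, the assignment $d\mapsto \mathrm{cap}(T)\,d^{n-2}/(4b^n)$ is a strictly increasing bijection of $(0,\infty)$ onto $(0,\infty)$. In either case there is a unique admissible $d\in(0,\infty)$ with $\sigma=\sigma_*$; explicitly $d=2b^2\sigma_*/\pi$ when $n=2$ and $d=\big(4b^n\sigma_*/\mathrm{cap}(T)\big)^{1/(n-2)}$ when $n>2$. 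For this pair $(b,d)$ the quantities in (\ref{sigmamu}) equal $(\sigma_*,\mu_*)$, and Theorem \ref{th1} applied with any $L>\mu_*$ gives the asserted single gap of $\sigma(\mathcal{A}\e)$ in $[0,L]$ together with its convergence to $(\sigma_*,\mu_*)$ as $\eps\to0$.

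I do not expect a genuine obstacle here: all of the analytic content is already carried by Theorem \ref{th1}, and the only point to check is that the back-solving keeps $b$ inside $(0,1)$ and $d$ inside $(0,\infty)$, which is precisely what the strict inequalities $0<\sigma_*<\mu_*$ and the positivity of $\mathrm{cap}(T)$ guarantee.
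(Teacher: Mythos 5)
Your proposal is correct and follows essentially the same route as the paper: invert the map $(d,b)\mapsto(\sigma,\mu)$ from \eqref{sigmamu}, taking $b=\sqrt[n]{1-\sigma\mu^{-1}}$ and then solving for $d$, and apply Theorem \ref{th1}. Your explicit formulas for $d$ and $b$ agree with \eqref{inverse}, so there is nothing to add.
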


\begin{proof}
It is easy to see that the map $(d,b)\overset{\eqref{sigmamu}}\mapsto (\sigma,\mu)$  is one-to-one and maps $(0,\infty)\times(0,1)$ onto $\left\{(\sigma,\mu)\in \mathbb{R}^2:\ 0<\sigma<\mu\right\}$. The inverse map is given by
\begin{gather}\label{inverse}
d=\begin{cases}\sqrt[n-2]{4\sigma(1-\sigma\mu^{-1})(\mathrm{cap}(T))^{-1}},&n>2,\\2\sigma\pi^{-1}(1-\sigma\mu^{-1}),&n=2,\end{cases}\qquad b=\sqrt[n]{1-\sigma\mu^{-1}}
\end{gather}
Then the domain $\Omega\e$ considered in Theorem \ref{th1} with $d$ and $b$ being defined by formula \eqref{inverse} satisfies the requirements of the corollary.
\end{proof}

\begin{remark} {\rm It will be easily seen from the proof of Theorem \ref{th1} that the main result remains valid for an arbitrary open domain $B$ which is compactly supported in the unit cube $(-1/2,1/2)^n$ and whose boundary contains an open flat subset on which we choose the point $x^0$. In this case the coefficients $\sigma$ and $\mu$ are defined again by formula \eqref{sigmamu} but with $|B|$ instead of $b^n$ (here by $|\cdot|$ we denote the volume of the domain).}
\end{remark}

\begin{remark}{\rm One can guess that in order to open up $m>1$  gaps we have to place $m$ screens $S_1\e$, $S_2\e$,\dots,$S_m\e$  in the cube $(-1/2,1/2)^n$. However the proof of this conjecture is more complicated comparing with the case $m=1$. We prove it in our next work. Below we only announce the result for the case $m=2$. 

Let $B_1$ and $B_2$ be arbitrary open cuboids which are compactly supported in $(-1/2,1/2)^n$.
On $\partial B_1$ and $\partial B_2$ we choose the points  $x^{1}$ and $x^{2}$ correspondingly. We  suppose that these points do not belong to the edges of cuboids. Let 
$D_1\e$ and $D_2\e$ be open balls with the radii $d_1\e$ and $d_2\e$ and the centres at $x^1$ and $x^2$ correspondingly. Here $d_j\e$, $j=1,2$ are defined by formula \eqref{d} but with $d_j$ instead of $d$ ($d_j>0$ are constants). For $i\in \mathbb{Z}^n$, $j=1,2$ we set  $S_{ij}\e=\eps (S_j\e+i)$ and finally
$$\Omega\e=\mathbb{R}^n\setminus\left(\cupl_{i\in \mathbb{Z}^n,j=1,2}\ S_{ij}\e\right)$$ 
By $\mathcal{A}\e$ we denote the Neumann Laplacian in $\Omega\e$.

We introduce the numbers $\sigma_j$, $j=1,2$ by formula \eqref{sigmamu} with $d_j$ and $|B_j|$ instead of $d$ and $b^n$ correspondingly.  We suppose that $d_j$ are such that the inequality $\sigma_1<\sigma_2$ holds. Finally we define
the numbers $\mu_j$ by the formula
$$\mu_j={1\over 2}\left(\rho_1+\rho_2+\sigma_1+\sigma_2+(-1)^j \sqrt{(\rho_1+\rho_2+\sigma_1+\sigma_2)^2-4(\rho_1\sigma_2+\rho_1\sigma_2+\sigma_1
\sigma_2)}\right)$$
where $\rho_j=\sigma_j |B_j| (1-|B_j|)^{-1}$.  It is not hard to check that $\sigma_1<\mu_1<\sigma_2<\mu_2$. 

Now, let  $L$ be an arbitrary number satisfying $L>\mu_2$ . Then $\sigma(\mathcal{A}\e)$ has just two gaps in $[0,L]$ when $\eps$ is small enough, moreover the edges of these gaps converge to the intervals $(\sigma_j,\mu_j)$ as $\eps\to 0$. Also it is easily to show that the points $\sigma_j,\mu_j$ can be controlled by a suitable choice of the numbers $d_j$ and the cuboids $B_j$.}

\end{remark}


\section{\label{sec2}Proof of Theorem \ref{th1}}

We present the proof of Theorem \ref{th1} for the case $n\geq 3$
only. For the case $n=2$ the proof is repeated word-by-word with
some small modifications.

In what follows by $C,C_1...$ we denote generic constants that do
not depend on $\eps$.

By $\langle u \rangle_B$ we denote the mean value of the function
$v(x)$ over the domain $B$, i.e. $$\langle u \rangle_B={1\over
|B|}\intl_B u(x)dx$$ 
Recall that by $|B|$ we denote the volume of the domain $B$.

\subsection{\label{ss21}Preliminaries}

We introduce the following sets (see Fig. \ref{fig2}):
\begin{itemize}
\item[] $Y=\left\{x\in \mathbb{R}^n:\ -1/2<x_i<1/2,\ \forall
i\right\}$.

\item[] $Y\e=Y\setminus S\e$.

\item[] $F=Y\setminus \overline B$.

\end{itemize}

Let $\A\e$ be the Neumann Laplacian in $\eps^{-1}\Omega\e$. It is
clear that
\begin{gather}\label{AA}
\sigma(\mathcal{A}\e)=\eps^{-2}\sigma(\A\e)
\end{gather}
$\A\e$ is an $\mathbb{Z}^n$-periodic operator, i.e. ${\A}\e$ commutes
with the translations $u(x)\mapsto u(x+i)$, $i\in \mathbb{Z}^n$.
For us it is more convenient to deal with the operator $\A\e$ since the
external boundary of its period cell is fixed (it coincides with
$\partial Y$).

\begin{figure}[h]
  \begin{center}
   \begin{picture}(0,0)
   \includegraphics{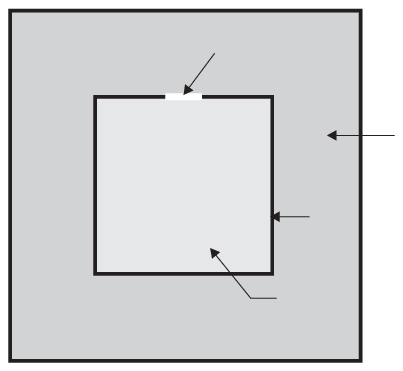}
   \end{picture}
    \setlength{\unitlength}{4144sp}%
    \begin{picture}(1750,2000)(0,0)
      \put(1750, 1000){$F$}
      \put(1350, 625){$S\e$}
      \put(1200, 250){$B$}
      \put(880, 1440){$D\e$}
    \end{picture}
  \end{center}
  \caption{The period cell $Y\e$}\label{fig2}
\end{figure}

In view of the periodicity of $\A\e$ the analysis of the spectrum
$\sigma(\A\e)$ is reduced to the analysis of the spectrum of the
Laplace operator on $Y\e$ with the Neumann boundary conditions on
$S\e$ and so-called \textit{$\theta$-periodic}
boundary conditions on $\partial Y$. Namely, let
$$\mathbb{T}^n=\left\{\theta=(\theta_1,\dots,\theta_n)\in
\mathbb{C}^n:\ \forall k\ |\theta_k|=1\right\}$$ For $\theta\in
\mathbb{T}^n$ we introduce the functional space $H_\theta^1(Y\e)$
consisting of functions from $H^1(Y\e)$ that satisfy the following
condition on $\partial{Y}$:
\begin{gather}\label{theta1}
\forall k=\overline{1,n}:\quad u(x+ e_k)=\theta_k u(x)\text{ for
}x=\underset{^{\overset{\qquad\qquad\uparrow}{\qquad\qquad
k\text{-th place}}\qquad }}{(x_1,x_2,\dots,-{1/2},\dots,x_n)}
\end{gather}
where $e_k={(0,0,\dots,1,\dots,0)}$.

By $\eta^{\theta,\eps}$ we denote the sesquilenear form defined by
formula (\ref{form1}) (with $Y\e$ instead of $\Omega$) and the
definitional domain $H_\theta^1(Y\e)$. We define the operator
$\A^{\theta,\eps}$ as the operator acting in $L_{2}(Y\e)$ and
associated with the form $\eta^{\theta,\eps}$, i.e.
\begin{gather*}
(\A^{\eps,\theta} u,v)_{L_2(Y\e)}= \eta^{\eps,\theta}[u,v],\quad\forall u\in
\mathrm{dom}(\A^{\eps,\theta}),\  \forall v\in \mathrm{dom}(\eta^{\eps,\theta})
\end{gather*}
The functions from $\mathrm{dom}(\A^{\theta,\eps})$ satisfy the Neumann boundary conditions on $S\e$, 
condition (\ref{theta1}) on $\partial Y$ and the condition
\begin{gather}\label{theta2}
\forall k=\overline{1,n}:\quad {\partial u\over\partial x_k}(x+
e_k)=\theta_k {\partial u\over\partial x_k}(x)\text{ for
}x={(x_1,x_2,\dots,-{1/2},\dots,x_n)}
\end{gather}

The operator $\A^{\theta,\eps}$ has purely discrete spectrum. We
denote by
$\left\{\lambda_k^{\theta,\eps}\right\}_{k\in\mathbb{N}}$ the
sequence of eigenvalues of $\A^{\theta,\eps}$ written in the
increasing order and repeated according to their multiplicity.

The Floquet-Bloch theory (see, e.g., \cite{Brown,Kuchment,Reed}) establishes
the following relationship between the spectra of the operators
$\A\e$ and $\A^{\theta,\eps}$:
\begin{gather}\label{repres1}
\sigma(\A\e)=\cupl_{k=1}^\infty L_k\text{, where
}L_k=\cupl_{\theta\in \mathbb{T}^n}
\left\{\lambda_k^{\theta,\eps}\right\}
\end{gather}
The sets $L_k$ are compact intervals.

Also we need the Laplace operators on $Y\e$ with the Neumann boundary
conditions on $S\e$ and either the Neumann or Dirichlet boundary
conditions on $\partial Y=\partial Y\e\setminus S\e$. Namely, we denote by $\eta^{N,\eps}$
(\textit{resp.} $\eta^{D,\eps}$) the sesquilinear form in
$L_2(Y\e)$ defined by formula (\ref{form1}) (with $Y\e$ instead of
$\Omega\e$) and the definitional domain $H^1(Y\e)$ (\textit{resp.}
$\widehat{H}^1_0(Y\e)=\left\{u\in H^1(Y\e):\ u=0\text{ on
}{\partial Y\e\setminus S\e}\right\}$). Then by $\A^{N,\eps}$
(\textit{resp.} $\A^{D,\eps}$) we denote the operator associated
with the form $\eta^{N,\eps}$ (\textit{resp.} $\eta^{D,\eps}$),
i.e.
\begin{gather*}
(\A^{\eps,*} u,v)_{L_2(Y\e)}= \eta^{\eps,*}[u,v],\quad\forall u\in
\mathrm{dom}(\A^{\eps,*}),\ \forall v\in \mathrm{dom}(\eta^{\eps,*})
\end{gather*}
where $*$ is $N$ (\textit{resp.} $D$).

The spectra of the operators $\A^{N,\eps}$ and $\A^{D,\eps}$ are
purely discrete. We denote by
$\left\{\lambda_k^{N,\eps}\right\}_{k\in\mathbb{N}}$
(\textit{resp.}
$\left\{\lambda_k^{D,\eps}\right\}_{k\in\mathbb{N}}$) the sequence
of eigenvalues of $\A^{N,\eps}$ (\textit{resp.} $\A^{D,\eps}$)
written in the increasing order and repeated according to their
multiplicity.

From the min-max principle (see, e.g., \cite[Chapter XIII]{Reed})
and the enclosure $H^1(Y\e) \supset H^1_\theta(Y\e)\supset
\widehat{H}^1_0(Y\e)$ one can easily obtain the inequality
\begin{gather}\label{enclosure}
\forall k\in \mathbb{N},\ \forall\theta\in \mathbb{T}^n:\quad
\lambda_k^{N,\eps} \leq\lambda_k^{\theta,\eps}\leq
\lambda_k^{D,\eps}
\end{gather}

In this end of this subsection we introduce the operators which
will be used in the description of the behaviour of $\lambda_k^N$,
$\lambda_k^D$ and $\lambda_k^\theta$ as $\eps\to 0$. By
$\Delta_F^N$ (\textit{resp.} $\Delta_F^D$, $\Delta_F^\theta$) we
denote the operator which acts in $L_2(F)$ and is defined by the
operation $\Delta$, the Neumann boundary conditions on $\partial B$
and the Neumann (\textit{resp.} Dirichlet, $\theta$-periodic) boundary
conditions on $\partial Y$. By $\Delta_B$ we denote the operator
which acts in $L_2(B)$ and is defined by the operation $\Delta$
and the Neumann boundary conditions on $\partial B$. Finally, we
introduce the operators $\A^N$, $\A^D$, $\A^\theta$ which act in
$L_2(F)\oplus L_2(B)$ and are defined by the following formulae:
\begin{gather*}
\A^N=-\left(\begin{matrix} \Delta_F^{N}&0\\0&\Delta_B
\end{matrix}\right),\quad \A^D=-\left(\begin{matrix}
\Delta_F^{D}&0\\0&\Delta_B\end{matrix}\right),\quad
\A^\theta=-\left(\begin{matrix} \Delta_F^{\theta}&0\\0&\Delta_B
\end{matrix}\right)
\end{gather*}
We denote by $\left\{\lambda_k^{N}\right\}_{k\in\mathbb{N}}$
(\textit{resp.} $\left\{\lambda_k^{D}\right\}_{k\in\mathbb{N}}$,
$\left\{\lambda_k^{\theta}\right\}_{k\in\mathbb{N}}$) the sequence
of eigenvalues of $\A^{N}$ (\textit{resp. }$\A^{D}$,
$\A^{\theta}$) written in the increasing order and repeated
according to their multiplicity. It is clear that
\begin{gather}\label{lambdaN}
\lambda_1^{N}=\lambda_2^{N}=0,\
\lambda_3^{N}>0\\\label{lambdaD} \lambda_1^{D}=0,\
\lambda_2^{D}>0\\\label{lambdaT1}
\lambda_1^{\theta}=\lambda^{\theta}_2=0,\
\lambda_3^{\theta}>0\text{ if
}\theta=(1,1,\dots,1)\\\label{lambdaT2}
\lambda_1^{\theta}=0,\ \lambda_2^{\theta}>0\text{ if
}\theta\not=(1,1,\dots,1)
\end{gather}

\subsection{\label{ss22}Asymptotic behaviour of Dirichlet eigenvalues}
We start from the description of the asymptotic behaviour of the
eigenvalues of the operator $\A^{D,\eps}$ as $\eps\to 0$.

\begin{lemma}\label{lm21}For each $k\in \mathbb{N}$ one has
\begin{gather}
\label{conv1}\liml_{\eps\to 0}\lambda_k^{D,\eps}=\lambda^D_k
\end{gather}
Furthermore
\begin{gather}
\label{conv1+} \lambda^{D,\eps}_1\sim \sigma\eps^2\text{ as
}\eps\to 0
\end{gather} where $\sigma$ is defined by formula
(\ref{sigmamu}).
\end{lemma}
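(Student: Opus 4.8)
\textbf{Proof proposal for Lemma \ref{lm21}.}

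The plan is to prove the two assertions separately, the convergence \eqref{conv1} first and then the sharp asymptotics \eqref{conv1+} for the lowest eigenvalue, which in fact contains the essential difficulty and will imply the $k=1$ case of \eqref{conv1} as well (since $\lambda_1^D=0$).

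For the convergence \eqref{conv1}, I would proceed by the standard two-sided spectral comparison. The underlying picture is that the thin screen $S\e$, filling almost the whole boundary $\partial B$ of the cube, decouples $Y\e$ into the ``outer'' region $F$ and the ``inner'' region $B$ in the limit: the tiny hole $D\e$ has vanishing capacity (radius $d\eps^{2/(n-2)}$, hence $\mathrm{cap}(\eps D\e)\sim d\e^{n-2}=d^{n-2}\eps^2$ after rescaling — more precisely it contributes only through the rescaled problem of order $\eps^2$), so that an $H^1(Y\e)$ function with controlled Dirichlet energy behaves, to leading order, like a function on $F$ (vanishing on $\partial Y$) plus an independent function on $B$. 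For the upper bound $\limsup_\eps \lambda_k^{D,\eps}\le\lambda_k^D$ I would use the min-max principle with test functions: take the first $k$ eigenfunctions of $\A^D=-\Delta_F^D\oplus(-\Delta_B)$, which are pairs $(v,w)$ with $v\in H^1_0$-type space on $F$ and $w$ Neumann on $B$, and glue them into functions on $Y\e$ by keeping $v$ on $F$, $w$ on $B$, and interpolating across the hole $D\e$ using a cut-off that is $1$ away from $x^0$ and whose Dirichlet energy on the $\eps$-scaled hole is $O(\eps^2)\to0$; the off-diagonal errors in the Rayleigh quotients vanish, giving the bound. For the lower bound $\liminf_\eps\lambda_k^{D,\eps}\ge\lambda_k^D$ one shows that any family of normalized eigenfunctions $u\e$ with $\lambda_k^{D,\eps}$ bounded is (after extracting a subsequence and using the extension/compactness in $F$ and in $B$) close in $L_2$ to a pair $(v,w)$ lying in the form domain of $\A^D$, with $\liminf$ of the energies bounded below by the energy of $(v,w)$; the key point making the decoupling rigorous is again that the trace-coupling through $D\e$ costs energy $\gtrsim \mathrm{cap}(\eps D\e)^{-1}\cdot(\text{jump})^2$, forcing the jump of the limit across $\partial B$ to be zero unless the energy blows up. This is precisely the classical Il'in–Hruslov-type homogenization-of-screens argument.

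For the sharp asymptotics \eqref{conv1+}, the heuristic is that $\lambda_1^{D,\eps}$ is governed entirely by the narrow channel through the hole: the first Dirichlet-type eigenfunction is $\approx 0$ on $F$ (forced small by the Dirichlet condition on $\partial Y$, size $O(\eps^2)$ in energy) and $\approx$ const $=c$ on $B$, and the eigenvalue equals the Rayleigh quotient, whose numerator is the minimal Dirichlet energy needed to bring a function from value $\approx0$ on $F$ to value $\approx c$ on $B$ through the hole $\eps D\e$, i.e. $c^2\,\mathrm{cap}(\eps D\e)(1+o(1))$, while the denominator is $\|u\e\|^2_{L_2(Y\e)}\approx c^2|B|\eps^n$ (the factor $\eps^n$ from the period cell $Y\e$ having side $\eps^{-1}$... — careful: here $Y\e$ has side $1$, so $\|u\e\|^2\approx c^2|B|$). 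After rescaling the hole, $\mathrm{cap}$ of a disc of radius $\rho$ in $\mathbb{R}^n$ equals $\rho^{n-2}\mathrm{cap}(T)$, so with $\rho=d\e=d\eps^{2/(n-2)}$ one gets $\mathrm{cap}(D\e)=d^{n-2}\eps^2\,\mathrm{cap}(T)$; matching the geometric factors (the $1/(4)$ comes from the disc sitting on a \emph{flat} piece of $\partial B$, so only a half-space on each side contributes and one works with the capacity relative to the two half-balls, giving the factor $\tfrac14$ as in \eqref{sigmamu}) yields $\lambda_1^{D,\eps}\sim \dfrac{d^{n-2}\mathrm{cap}(T)}{4|B|}\eps^2=\dfrac{d^{n-2}\mathrm{cap}(T)}{4b^n}\eps^2=\sigma\eps^2$. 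To make this rigorous I would: (i) for the upper bound, plug the explicit test function (the capacity potential of $\eps D\e$ glued to $0$ on a neighbourhood of $\partial Y$ in $F$ and to $1$ on $B$) into the Rayleigh quotient of $\A^{D,\eps}$ and compute both numerator and denominator to leading order in $\eps$; (ii) for the lower bound, show that the normalized first eigenfunction $u\e$ must be asymptotically constant on $B$ and asymptotically $0$ on $F$ — because $\lambda_1^{D,\eps}=O(\eps^2)\to0$ forces, via the decoupling of step one together with $\lambda_2^D>0$, that the limit pair has $w=$ const and $v=0$ — and then bound the energy from below by the capacity of the hole times the square of the (renormalized) value on $B$, using that the minimizer of Dirichlet energy with prescribed boundary values on the two components is exactly the capacitary potential.

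The main obstacle is the lower bound in \eqref{conv1+}: one must rule out that the eigenfunction ``cheats'' by concentrating mass near the hole or near $\partial Y$ in a way that beats the capacitary estimate. This is handled by a careful partition of the energy — splitting $Y\e$ into $B$, a fixed annular region around the hole where the capacitary variational problem lives (after $\eps$-rescaling, a fixed annulus in $\mathbb{R}^n$ on which $-\Delta$-harmonic functions minimize energy for given boundary data), and the remainder of $F$ — together with a Poincaré inequality on $B$ (to control the deviation of $u\e$ from its mean by $\lambda_1^{D,\eps}\to0$) and on $F$ with the Dirichlet condition on $\partial Y$ (to control $\|u\e\|_{L_2(F)}$ by its energy, which is $O(\eps^2)$). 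I expect the $n=2$ case to require only the obvious replacement of the power-law capacity of a disc by the logarithmic capacity, which is exactly why the hole radius is taken exponentially small, $e^{-1/d\eps^2}/\eps$, so that the logarithmic capacity again produces a factor of order $\eps^2$; the constant $\pi d/(2b^2)$ arises from $\mathrm{cap}_{\log}$ of a segment together with the same factor $\tfrac14$ from flatness of the screen.
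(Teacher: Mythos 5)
Your overall strategy is sound and, at the decisive step, coincides with the paper's: the sharp upper bound in \eqref{conv1+} is obtained there exactly as you propose, by inserting the test function \eqref{hat_v} built from the capacitary potential $\psi$ of the disc $T$ scaled to radius $d\e$ and cut off at distance $l$ from $x^0$ into the Rayleigh quotient (inequality \eqref{courant}); your explanation of the factor $\tfrac14$ (the jump $|B|^{-1/2}$ split symmetrically between the two sides of the screen) is the right one, as is the $n=2$ bookkeeping. For \eqref{conv1} the paper differs in form but not in substance: instead of a two-sided min--max comparison it verifies conditions $C_1$--$C_4$ of the abstract theorem of Iosifyan--Oleinik--Shamaev for the resolvents $(\A^{D,\eps}+\mathrm{I})^{-1}$, which is essentially your compactness argument packaged abstractly. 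One remark of yours is misstated, though: nothing ``forces the jump of the limit across $\partial B$ to be zero'' --- the limit operator $\A^D$ is the \emph{decoupled} one, and the continuity constraint through the shrinking hole simply disappears in the limit; in fact the inclusion $\widehat H^1_0(Y\e)\subset \widehat H^1_0(F)\oplus H^1(B)$, with identical forms, already yields $\lambda_k^{D,\eps}\geq\lambda_k^D$ for every $\eps$, so the capacity of $D\e$ plays no role at this order.

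Where you genuinely diverge from the paper is the lower bound in \eqref{conv1+}, and this is also where your proposal has a real gap. You want to bound the energy from below by $\tfrac14\,\mathrm{cap}(T)(d\e)^{n-2}$ times the square of the limiting value on $B$, ``using that the minimizer with prescribed boundary values is the capacitary potential''; but the eigenfunction has no prescribed boundary values on the boundary of your annular region --- all that the a priori estimates (Friedrichs on $F$, Poincar\'e on $B$, $\lambda_1^{D,\eps}=O(\eps^2)$) give is $L_2$-closeness of $v_1^{D,\eps}$ to $0$ on $F$ and to $|B|^{-1/2}$ on $B$, and converting that into a capacitary lower bound with the sharp constant requires an extra argument (a Fubini-type selection of good radii where traces are controlled, or a duality argument testing the equation against the capacitary potential), which you do not supply. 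The paper avoids this crux entirely: setting $w\e=v_1^{D,\eps}-\mathbf{v}_1^{D,\eps}$ as in \eqref{differenceD}, it first gets $\|w\e\|^2_{L_2(Y\e)}\leq C\eps^2$ from the a priori estimates, then substitutes $v_1^{D,\eps}=\mathbf{v}_1^{D,\eps}+w\e$ into \eqref{courant}, integrates by parts and uses $\|\Delta\mathbf{v}_1^{D,\eps}\|^2_{L_2(Y\e)}\leq C\eps^4$ (estimate \eqref{vbf_est1+}) to conclude $\|\nabla w\e\|^2_{L_2(Y\e)}=o(\eps^2)$ in \eqref{w_est2}; hence $\lambda_1^{D,\eps}=\|\nabla v_1^{D,\eps}\|^2_{L_2(Y\e)}\sim\|\nabla\mathbf{v}_1^{D,\eps}\|^2_{L_2(Y\e)}\sim\sigma\eps^2$, i.e.\ the upper-bound test function is shown a posteriori to be $H^1$-close to the true eigenfunction, and no separate capacitary lower bound is needed. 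Either route can be completed, but as written your lower-bound step is a heuristic precisely at the heart of the lemma; adopt the paper's difference estimate or supply the missing trace-selection/duality argument to close it.
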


\begin{proof}
We start from the proof of \eqref{conv1}. It is based on the
following abstract theorem.

\begin{theorem*}\textbf{(Iosifyan et al. \cite{IOS})} Let $\mathcal{H}^\eps, \mathcal{H}^0$ be
separable Hilbert spaces, let $\mathcal{L}\e:\mathcal{H}\e\to
\mathcal{H}\e,\ \mathcal{L}^0:\mathcal{H}^0 \to \mathcal{H}^0$ be
linear continuous operators,
$\mathrm{im}\mathcal{L}^0\subset\mathcal{V}\subset \mathcal{H}^0$,
where $\mathcal{V}$ is a subspace in $\mathcal{H}^0$. Suppose that the following conditions $C_1-C_4$ hold:

{$C_1.$} The linear bounded operators $R\e:\mathcal{H}^0\to
\mathcal{H}\e$ exist such that $ \|R\e
f\|^2_{\mathcal{H}\e}\underset{\eps\to 0}\to
\varrho\|f\|^2_{\mathcal{H}^0}$ for any $f\in \mathcal{V}$. Here
$\varrho>0$ is a constant.

{$C_2.$} Operators $\mathcal{L}\e, \mathcal{L}^0$ are positive,
compact and self-adjoint. The norms
$\|\mathcal{L}\e\|_{\mathcal{L}(\mathcal{H}\e)}$ are bounded
uniformly in $\eps$.

{$C_3.$} For any $f\in \mathcal{V}$: $\|\mathcal{L}\e R\e
f-R\e\mathcal{L}^0 f\|_{\mathcal{H}\e}\underset{\eps\to 0}\to 0$.

{$C_4.$} For any sequence $f\e\in \mathcal{H\e}$ such that
$\sup\limits_{\eps} \|f\e\|_{\mathcal{H}\e}<\infty$ the
subsequence $\eps^\prime\subset\eps$ and $w\in \mathcal{V}$ exist
such that $ \|\mathcal{L}\e f\e-R\e
w\|_{\mathcal{H}\e}\underset{\eps=\eps^\prime\to 0}\longrightarrow
0$.

Then for any $k\in\mathbb{N}$\ $$\mu_k\e\underset{\eps\to
0}\to\mu_k$$ where $\{\mu_k\e\}_{k=1}^\infty$ and
$\left\{\mu_k\right\}_{k=1}^\infty$ are the eigenvalues of the
operators $\mathcal{L}\e$ and $\mathcal{L}^0$, which are
renumbered in the increasing order and with account of their
multiplicity.
\end{theorem*}

Let us apply this theorem. We set $\mathcal{H}\e=L_2(Y\e)$,
$\mathcal{H}^0=L_2(F)\oplus L_2(B)$,
$\mathcal{L}\e=(\A^{D,\eps}+\mathrm{I})^{-1}$,
$\mathcal{L}^0=(\A^D+\mathrm{I})^{-1}$ (here $\mathrm{I}$ is the identity operator),
$\mathcal{V}=\mathcal{H}^0$, the operator
$R\e:\mathcal{H}^0\to \mathcal{H}\e$ is defined by the formula
\begin{gather}\label{Reps}
[R\e f](y)=\begin{cases}f_F(y),&y\in F ,\\
f_B(y),&y\in {B},
\end{cases}\quad
f=(f_{F},f_{B})\in \mathcal{H}^0
\end{gather}

Obviously conditions $C_1$ (with $\varrho=1$) and $C_2$ hold (namely, $\|\mathcal{L}\e\|_{\mathcal{L}(\mathcal{H}\e)}\leq 1$).

Let us verify condition $C_3$. Let $f=(f_F,f_B)\in\mathcal{H}^0$. We set
$f\e=R\e f$, $v\e=\mathcal{L}\e f\e$. By $v\e_F$ and $v\e_{B}$ we denote the
restrictions of $v\e$ onto $F$ and $B$ correspondingly. It is clear that
\begin{gather}\label{est1}
\|v_F\e\|^2_{H^1(F)}+\|v_B\e\|^2_{H^1(B)}=\|v\e\|^2_{H^1(Y\e)}\leq 2\|f\e\|^2_{L_2(Y\e)}=2\|f\|_{\mathcal{H}^0}
\end{gather}
We denote
$$\widehat H^1_0(F)=\left\{ w\in H^1(F):\
w|_{\partial Y}=0\right\}$$
Since  $\widehat H_0^1(F)\oplus H^1(B)$ is compactly embedded into $\mathcal{H}^0$ then due to estimate (\ref{est1}) there is a subsequence
$\eps^\prime\subset\eps$ and $v_F\in \widehat H_0^1(F)$, $v_B\in H^1(B)$ such
that
\begin{gather}\label{conv_RB}
\begin{matrix}
v_{F}\e\underset{\eps=\eps^\prime\to 0}\longrightarrow v_{F}
 \text{ weakly in }H^1(F)\text{ and
strongly in }L_2(F)\\
v_{B}\e\underset{\eps=\eps^\prime\to 0}\longrightarrow v_B\text{
weakly in }H^1(B)\text{ and strongly in }L_2({B})
\end{matrix}
\end{gather}

One has the integral equality
\begin{gather}
\label{int_ineq_D}
\intl_{Y\e}\bigg[ \big(\nabla v\e,\nabla
w\e\big)+ v\e w\e-f\e w\e\bigg]dy=0,\quad
\forall w\e\in \widehat H_0^1(Y\e)
\end{gather}
We introduce the set
$$W=\bigg\{(w_F,w_B)\in C^\infty(\overline{F})\oplus C^\infty(\overline{B}):\ w_F|_{\partial Y}=0,\quad \big(\supp (w_F)\cup\supp (w_B)\big)\cap\{x^0\}=\varnothing \bigg\}$$
(here as usual by $\supp(f)$ we denote the \textit{closure} of the set $\{x:\ f(x)\not=0\}$). Let $w=(w_F,w_B)$ be an arbitrary function from $W$.  We set $w\e=R\e w$. It follows from the definition of $W$ that $\supp(w\e)\cap \overline{D\e}=\varnothing$ when $\eps$ is small enough and therefore $w\e\in C^\infty(Y\e)\cap\widehat H_0^1(Y\e)$.

We substitute $w\e$ into
(\ref{int_ineq_D}) and taking into account (\ref{conv_RB}) we pass to the limit in
(\ref{int_ineq_D}) as  $\eps=\eps^\prime\to 0$. As a result we obtain
\begin{gather}\label{int_ineq_final}
\intl_{F}\bigg[\big(\nabla v_{F},\nabla w_F\big)+v_F w_F-f_F
w_F\bigg]dy+\intl_B\bigg[\big(\nabla v_B,\nabla w_B\big)+  v_B
w_B-f_B w_B\bigg]dy=0
\end{gather}
Since $W$ is a dense
subspace of $\widehat H^1_0(F)\oplus H^1(B)$ then (\ref{int_ineq_final}) is valid for an arbitrary $(w_F,w_B)\in\widehat H^1_0(F)\oplus H^1(B)$. It follows from (\ref{int_ineq_final})
that $-\Delta^{D}_F
v_F+v_F=f_F$ and $-\Delta_B v_B+ v_B=f_B$ and consequently
\begin{gather}\label{vA0}
v=\mathcal{L}^0 f,\text{ where }v=(v_F,v_B)
\end{gather}
We remark that $v$ do not depend on the subsequence $\eps^\prime$
and therefore the whole
sequence $(v\e_F,v\e_B)$ converges to $(v_F,v_B)$ as $\eps\to 0$. Condition $C_3$ follows directly from (\ref{Reps}), (\ref{conv_RB}) and (\ref{vA0}). Obviously condition $C_4$  was proved during the proof of $C_3$.

Thus the
eigenvalues $\mu_k\e$ of the operator $\mathcal{L}\e$ converges to
the eigenvalues $\mu_k$ of the operator $\mathcal{L}^0$ as
$\eps\to 0$. But $\lambda_k^{D,\eps}=(\mu_k\e)^{-1}-1$, $
\lambda_k=(\mu_k)^{-1}-1$ that implies (\ref{conv1}).\bigskip

Now we focus on the proof of \eqref{conv1+}. Let $v_1^{D,\eps}$ be the eigenfunction of $\A^{D,\eps}$ that
corresponds to the eigenvalue $\lambda^{D,\eps}_1$ and satisfies
\begin{gather}
\label{normalization}\|v_1^{D,\eps}\|_{L_2(Y\e)}=1\\\label{positivity}
\langle v_1^{D,\eps} \rangle_{B}\geq 0
\end{gather}

One has the following inequalities
\begin{gather} \label{v_est1}
\|v_1^{D,\eps}\|^2_{L_2(F)}\leq C\|\nabla
v_1^{D,\eps}\|^2_{L_2(F)}\\\label{v_est2} \|v_1^{D,\eps}-\langle
v_1^{D,\eps} \rangle_B\|^2_{L_2(B)}\leq C\|\nabla
v_1^{D,\eps}\|^2_{L_2(B)}\\\label{v_est_2+}|B|\cdot|\langle
v_1^{D,\eps} \rangle_B|^2\leq \|v_1^{D,\eps}\|^2_{L_2(B)}\leq 1
\end{gather}
Here the first one is the Friedrichs inequality, the second one is
the Poincar\'{e} inequality and the third one is the Cauchy
inequality. Furthermore one has
\begin{gather}\label{v_est3}
\|\nabla v_1^{D,\eps}\|^2_{L_2(Y\e)}=
\lambda^{D,\eps}_{1}\left(\|v_1^{D,\eps}\|^2_{L_2(F)}+\|v_1^{D,\eps}-\langle
v_1^{D,\eps} \rangle_B\|^2_{L_2(B)}+|B|\cdot|\langle v_1^{D,\eps}
\rangle_B|^2\right)
\end{gather}
Below we will prove (see inequality (\ref{courant}))  that
\begin{gather}\label{v_est4}
\lambda_1^{D,\eps}\leq C\eps^2
\end{gather}
Then it follows from (\ref{normalization}), (\ref{v_est1})-(\ref{v_est4}) that
\begin{gather}\label{v_est5}
\|\nabla
v_1^{D,\eps}\|^2_{L_2(Y\e)}\leq C\eps^2\\\label{v_est6}\|v_1^{D,\eps}\|^2_{L_2(F)}\leq C\eps^2\\
\label{v_est7} \|v_1^{D,\eps}-\langle v_1^{D,\eps}
\rangle_B\|^2_{L_2(B)}\leq C\eps^2
\end{gather}
The last inequality can be specified. Namely, one has
\begin{gather}\label{v_est8}
1=\|v_1^{D,\eps}\|^2_{L_2(F)}+\|v_1^{D,\eps}-\langle
v_1^{D,\eps}\rangle_B\|^2_{L_2(B)}+|B|\cdot|\langle v_1^{D,\eps}
\rangle_B|^2
\end{gather}
Then in view of (\ref{v_est6})-(\ref{v_est8})
$$\left||\langle v_1^{D,\eps}\rangle_B|^2-|B|^{-1}\right|\leq C\eps^2$$ Finally, taking into account \eqref{positivity}
we get:
\begin{gather}\label{v_est9}
\liml_{\eps\to 0}\|v_1^{D,\eps}-|B|^{-1/2}\|^2_{L_2(B)}=0
\end{gather}

Now we construct a convenient approximation
$\mathbf{v}_1^{D,\eps}$ for the eigenfunction $v_1^{D,\eps}$. We
consider the following problem
\begin{gather}\label{bvp-}
\Delta\psi=0\text{ in }\mathbb{R}^n\setminus \overline{T}\\\label{bvp}
\psi=1\text{ in }\partial T\\\label{bvp+}
\psi(x)=o(1)\text{ as }|x|\to\infty
\end{gather}
Recall that $T=\left\{x\in \mathbb{R}^n:\ |x|<1,\ x_n=0\right\}$,
obviously $\partial T=\overline{T}$.  It is well-known that this
problem has the unique solution $\psi(x)$ satisfying
$\intl_{\mathbb{R}^n\setminus T}|\nabla\psi|^2 dx<\infty$.
Moreover it has the following properties:
\begin{gather}\label{psi1}
\psi\in
C^\infty(\mathbb{R}^n\setminus\overline{T})\\\label{psi2}
\psi(x_1,x_2,\dots,x_{n-1},x_n)=\psi(x_1,x_2,\dots,x_{n-1},-x_n)\\\label{psi3}\mathrm{cap}(T)=\ds\intl_{\mathbb{R}^n\setminus
T}|\nabla\psi|^2 dx
\end{gather}
The first two properties imply:
\begin{gather}\label{psi4}
{\partial\psi\over\partial x_n}=0\text{ in } \left\{x\in \mathbb{R}^n:\ x_n=0\right\}\setminus \overline T\\\label{psi5}
\left.{\partial\psi\over\partial x_n}\right|_{x_n=+0}+\left.{\partial\psi\over\partial x_n}\right|_{x_n=-0}=0\text{ in } T
\end{gather}
Furthermore the function $\psi(x)$ satisfies the estimate (see,
e.g, \cite[Lemma 2.4]{March}):
\begin{gather}\label{cap_est}
|D^\a\psi(x)|\leq {C|x|^{2-n-\a}}\ \text{ for }|x|>2,\ |\a|=0,1,2
\end{gather}

We define the function $\mathbf{v}_1^{D,\eps}$ by the formula
\begin{gather}\label{hat_v}
\mathbf{v}_1^{D,\eps}(x)=
\begin{cases}
\ds{1\over 2 \sqrt{|B|}}\psi\left({x-x^0\over
d\e}\right)\varphi\left({|x-x^0|\over l}\right),&x\in F\\\ds
{1\over \sqrt{|B|}}-{1\over 2\sqrt{|B|}}\psi\left({x-x^0\over
d\e}\right)\varphi\left({|x-x^0|\over l}\right),&x\in B\cup  D\e
\end{cases}
\end{gather}
where $\varphi:\mathbb{R}\to \mathbb{R}$ is a twice-continuously
differentiable function such that
\begin{gather}\label{varphi}
\varphi(\rho)=1\text{ as }\rho\leq 1/2\text{ and
}\varphi(\rho)=0\text{ as }\rho\geq 1,
\end{gather}
$l$ is an arbitrary constant satisfying
\begin{gather}\label{l}
0<l<{1\over 4}\minl\left\{{1-b},{b}\right\}
\end{gather}
Here we also suppose that $\eps$ is small enough so that
$d\e<l/2$.  It is easy to see that  the constructed function
$\mathbf{v}_1^{D,\eps}(x)$ belongs to $\mathrm{dom}(\A^{D,\eps})$
in view (\ref{bvp}), (\ref{psi1}), (\ref{psi4}), (\ref{psi5}),
(\ref{varphi}), (\ref{l}).

Taking into account (\ref{psi3}), (\ref{cap_est}) we obtain:
\begin{gather}\label{vbf_est1}
\|\nabla
\mathbf{v}_1^{D,\eps}\|^2_{L_2(Y\e)}\sim 4^{-1}\mathrm{cap}(T)d^{n-2}|B|^{-1}\eps^2=\sigma\eps^2\quad
(\eps\to 0) \\
\label{vbf_est1+}\|\Delta\mathbf{v}_1^{D,\eps}\|^2_{L_2(Y\e)}\leq
C\eps^4
\end{gather}
Since $\mathbf{v}_1^{D,\eps}=0$ on $\partial Y$ and
$\big[\mathbf{v}_1^{D,\eps}-{|B|}^{-1/2}\big]_{int}=0$ on $\partial
B\setminus\left\{x:\ |x-x^0|\leq l\right\}$  (here $[\dots]_{int}$ means the value of the function when we approach $\partial
B$ from inside of $B$) we have the following Friedrichs inequalities
\begin{gather}\label{vbf_est2}
\|\mathbf{v}_1^{D,\eps}\|^2_{L_2(F)}\leq C\|\nabla
\mathbf{v}_1^{D,\eps}\|^2_{L_2(F)}\\\label{vbf_est3}
\|\mathbf{v}_1^{D,\eps}-|B|^{-1/2}\|^2_{L_2(B)}\leq C\|\nabla
\mathbf{v}_1^{D,\eps}\|^2_{L_2(B)}
\end{gather}
It follows from (\ref{vbf_est1}), (\ref{vbf_est2}),
(\ref{vbf_est3}) that
\begin{gather}\label{vbf_est4}
\|\mathbf{v}_1^{D,\eps}\|^2_{L_2(Y\e)}\sim 1\quad (\eps\to 0)
\end{gather}

Using the min-max principle (see, e.g., \cite[Chapter XIII]{Reed}) and taking into account
\eqref{vbf_est1}, \eqref{vbf_est4} we get
\begin{gather}\label{courant}
\lambda^{D,\eps}_{1}={\|\nabla v_1^{D,\eps}\|^2_{L_2(Y\e)}}\leq
\ds{\|\nabla
\mathbf{v}_1^{D,\eps}\|^2_{L_2(Y\e)}\over\|\mathbf{v}_1^{D,\eps}\|_{L_{2}
(Y\e)}^2}\sim
{\mathrm{cap}(T)d^{n-2}|B|^{-1}\eps^2}=\sigma\eps^2\quad (\eps\to
0)
\end{gather}

Now let us estimate the difference
\begin{gather}\label{differenceD}
w\e=v_1^{D,\eps}-\mathbf{v}_1^{D,\eps}
\end{gather} One has
\begin{gather*}
\|w\e\|^2_{L_2(Y\e)}\leq
2\left(\|v_1^{D,\eps}\|^2_{L_2(F)}+\|\mathbf{v}_1^{D,\eps}\|^2_{L_2(F)}\right)+
2\left(\|v_1^{D,\eps}-|B|^{-1/2}\|^2_{L_2(B\e)}+\||B|^{-1/2}-\mathbf{v}_1^{D,\eps}\|^2_{L_2(B\e)}\right)
\end{gather*}
and thus in view of (\ref{v_est6}), (\ref{v_est9}),
(\ref{vbf_est1}), (\ref{vbf_est2}), (\ref{vbf_est3}) we get
\begin{gather}\label{w_est1}
\|w\e\|^2_{L_2(Y\e)}\leq C\eps^2
\end{gather}

Substituting the equality $v_1^{D,\eps}=\mathbf{v}_1^{D,\eps}+w\e$ into (\ref{courant})
and integrating by parts we obtain
\begin{gather*}
\|\nabla w\e\|^2_{L_2(Y\e)}\leq 2(\Delta
\mathbf{v}_1^{D,\eps},w\e)_{L_2(Y\e)}+\left({\|\nabla\mathbf{v}_1^{D,\eps}\|^2_{L_2(Y\e)}\over
\|\mathbf{v}_1^{D,\eps}\|^2_{L_2(Y\e)}}-
\|\nabla\mathbf{v}_1^{D,\eps}\|^2_{L_2(Y\e)}\right)
\end{gather*}
and in view of (\ref{vbf_est1}), (\ref{vbf_est1+}),
(\ref{vbf_est4}), (\ref{w_est1}) we conclude that
\begin{gather}\label{w_est2}
\liml_{\eps\to 0}\eps^{-2}\|\nabla w\e\|^2_{L_2(Y\e)}=0
\end{gather}

Finally using (\ref{vbf_est1}), (\ref{w_est2}) we obtain
\begin{gather}\label{dlast}
\lambda^{D,\eps}_{1}\sim \ds{\|\nabla
\mathbf{v}_1^{D,\eps}\|^2_{L_2(Y\e)}}\sim\sigma\eps^2\quad
(\eps\to 0)
\end{gather}
The lemma is proved.
\end{proof}

\subsection{\label{ss23}Asymptotic behaviour of Neumann eigenvalues}
In this subsection we study the behaviour of the eigenvalues of
the operator $\A^{N,\eps}$ as $\eps\to 0$.

\begin{lemma}\label{lm23}For each $k\in \mathbb{N}$ one has
\begin{gather}
\label{conv2}\liml_{\eps\to 0}\lambda_k^{N,\eps}=\lambda^N_k
\end{gather}
Furthermore
\begin{gather}\label{conv2+}
\lambda^{N,\eps}_2\sim \mu\eps^2\text{ as }\eps\to 0
\end{gather}
where $\mu$ is defined by formula (\ref{sigmamu}).
\end{lemma}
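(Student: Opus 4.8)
The plan is to follow the two-step pattern of Lemma~\ref{lm21}: first prove the convergence \eqref{conv2} via the abstract theorem of Iosifyan et al., and then extract the sharp rate \eqref{conv2+} from an explicit trial function combined with an estimate on the eigenfunction itself. For \eqref{conv2} I would apply the abstract theorem with $\mathcal{L}\e=(\A^{N,\eps}+\mathrm{I})^{-1}$, $\mathcal{L}^0=(\A^N+\mathrm{I})^{-1}$, the same $\mathcal{H}\e=L_2(Y\e)$, $\mathcal{H}^0=L_2(F)\oplus L_2(B)$, the same $R\e$ from \eqref{Reps} and $\mathcal{V}=\mathcal{H}^0$; conditions $C_1$ (with $\varrho=1$) and $C_2$ are verbatim the same as in Lemma~\ref{lm21}. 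The verification of $C_3$ and $C_4$ differs only in that the Dirichlet trace on $\partial Y$ is absent: the test class becomes $W=\{(w_F,w_B)\in C^\infty(\overline{F})\oplus C^\infty(\overline{B}):(\supp w_F\cup\supp w_B)\cap\{x^0\}=\varnothing\}$, which is dense in $H^1(F)\oplus H^1(B)$ because the point $x^0$ has zero $H^1$-capacity, and the compactness needed in $C_3$ and $C_4$ comes from the compact embedding of $H^1(F)\oplus H^1(B)$ into $L_2(F)\oplus L_2(B)$. Since $\supp(R\e w)\cap\overline{D\e}=\varnothing$ for small $\eps$, the limit passage in the integral identity goes through exactly as before and shows that any weak limit $(v_F,v_B)$ solves $-\Delta_F^N v_F+v_F=f_F$, $-\Delta_B v_B+v_B=f_B$, i.e. equals $\mathcal{L}^0 f$; hence $\lambda_k^{N,\eps}\to\lambda_k^N$.

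The core of the lemma is \eqref{conv2+}. Since $\A^{N,\eps}$ is nonnegative with simple lowest eigenvalue $\lambda_1^{N,\eps}=0$ (the constants; $Y\e$ is connected through the hole $D\e$), the min-max principle characterizes $\lambda_2^{N,\eps}$ as the infimum of $\|\nabla u\|^2_{L_2(Y\e)}/\|u\|^2_{L_2(Y\e)}$ over nonzero $u\in H^1(Y\e)$ with $\intl_{Y\e}u\,dx=0$. For the upper bound I take a trial function $\mathbf{v}_2^{N,\eps}$ of the same shape as \eqref{hat_v}, equal to the constants $c_F=-\sqrt{|B|/|F|}$ on $F$ and $c_B=\sqrt{|F|/|B|}$ on $B$ away from $x^0$, joined through the hole by the profile $\psi((x-x^0)/d\e)\,\varphi(|x-x^0|/l)$ so as to attain the matching value $c_0=\tfrac12(c_F+c_B)$ on $D\e$; here $|B|=b^n$, $|F|=1-b^n$, and the constants are chosen so that $c_F|F|+c_B|B|=0$ and $c_F^2|F|+c_B^2|B|=1$. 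Rescaling by $d\e$ and using \eqref{psi2}, \eqref{psi3}, \eqref{cap_est} as in the derivation of \eqref{vbf_est1} gives
$$\|\nabla\mathbf{v}_2^{N,\eps}\|^2_{L_2(Y\e)}\sim\tfrac12\,\mathrm{cap}(T)\,d^{n-2}\eps^2\big[(c_0-c_F)^2+(c_0-c_B)^2\big]=\tfrac14\,\mathrm{cap}(T)\,d^{n-2}\eps^2(c_B-c_F)^2=\mu\eps^2,$$
while $\|\mathbf{v}_2^{N,\eps}\|^2_{L_2(Y\e)}\to1$, $|\langle\mathbf{v}_2^{N,\eps}\rangle_{Y\e}|\le C\eps^2$ and $\|\Delta\mathbf{v}_2^{N,\eps}\|^2_{L_2(Y\e)}\le C\eps^4$ (the Laplacian lives only on the fixed annulus $l/2<|x-x^0|<l$, where $\psi((x-x^0)/d\e)=O(\eps^2)$). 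Testing the min-max with $\mathbf{v}_2^{N,\eps}-\langle\mathbf{v}_2^{N,\eps}\rangle_{Y\e}$ then yields $\lambda_2^{N,\eps}\le\mu\eps^2(1+o(1))$, and in particular $\lambda_2^{N,\eps}\le C\eps^2$.

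For the matching lower bound I repeat the second half of the proof of Lemma~\ref{lm21}. Let $v_2^{N,\eps}$ be the second eigenfunction, normalized by $\|v_2^{N,\eps}\|_{L_2(Y\e)}=1$ and $\langle v_2^{N,\eps}\rangle_B\ge0$. From $\|\nabla v_2^{N,\eps}\|^2_{L_2(Y\e)}=\lambda_2^{N,\eps}\le C\eps^2$ and the Poincaré inequality on $F$ and on $B$, $v_2^{N,\eps}$ is within $C\eps$ in $L_2$ of its means $\langle v_2^{N,\eps}\rangle_F$ and $\langle v_2^{N,\eps}\rangle_B$; substituting this into $\intl_{Y\e}v_2^{N,\eps}\,dx=0$ and $\|v_2^{N,\eps}\|_{L_2(Y\e)}=1$ forces $\langle v_2^{N,\eps}\rangle_F\to-\sqrt{|B|/|F|}$ and $\langle v_2^{N,\eps}\rangle_B\to\sqrt{|F|/|B|}$, so that $\|w\e\|_{L_2(Y\e)}\le C\eps$ for $w\e=v_2^{N,\eps}-\mathbf{v}_2^{N,\eps}$. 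The choice $c_0=\tfrac12(c_F+c_B)$ is precisely the one making the normal flux of $\mathbf{v}_2^{N,\eps}$ continuous across $D\e$ (by \eqref{psi5}), so $\mathbf{v}_2^{N,\eps}\in\mathrm{dom}(\A^{N,\eps})$ and $(\nabla v_2^{N,\eps},\nabla\mathbf{v}_2^{N,\eps})_{L_2(Y\e)}=-(\Delta\mathbf{v}_2^{N,\eps},v_2^{N,\eps})_{L_2(Y\e)}$; combining this with the min-max bound exactly as in the derivation of \eqref{w_est2} gives $\eps^{-2}\|\nabla w\e\|^2_{L_2(Y\e)}\to0$, whence $\lambda_2^{N,\eps}=\|\nabla(\mathbf{v}_2^{N,\eps}+w\e)\|^2_{L_2(Y\e)}=\|\nabla\mathbf{v}_2^{N,\eps}\|^2_{L_2(Y\e)}+o(\eps^2)\sim\mu\eps^2$. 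The step I expect to be the main obstacle is the identification of the limiting profile of $v_2^{N,\eps}$: in contrast with Lemma~\ref{lm21}, where $\lambda_1^{D,\eps}$ sits at the bottom of the spectrum, here the orthogonality constraint $\intl_{Y\e}u\,dx=0$ must be carried through both bounds, and it is precisely this constraint, together with the normalization, that selects the constants $c_F,c_B$ and thereby produces the factor $(1-b^n)^{-1}$ distinguishing $\mu$ from $\sigma$. For $n=2$ the whole argument is repeated with $\psi((x-x^0)/d\e)$ replaced by the logarithmic cut-off profile used in Lemma~\ref{lm21}.
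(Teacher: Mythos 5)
Your proposal is correct and follows essentially the same route as the paper: convergence \eqref{conv2} via the Iosifyan--Olejnik--Shamaev theorem with the same $R\e$ and the test class $W$ adapted by dropping the trace condition on $\partial Y$, and the rate \eqref{conv2+} via the trial function \eqref{vbfn} (your $c_F,c_B,c_0$ and the energy computation reproduce \eqref{vbfn_est1}--\eqref{vbfn_est5} and \eqref{courant1}), combined with the Poincar\'e/orthogonality/normalization estimates \eqref{vn_est1}--\eqref{262} for $v_2^{N,\eps}$ and the integration-by-parts estimate of $w\e$ leading to \eqref{wn_est2} and \eqref{nlast}. No essential deviations or gaps.
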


\begin{proof}
The proof of (\ref{conv2}) is similar to the proof of
\eqref{conv1}, so we focus on the proof of \eqref{conv2+}.

Let $v_2^{N,\eps}$ be the eigenfunction of $\A^{N,\eps}$ that
corresponds to $\lambda_2^{N,\eps}$ and satisfies
\begin{gather}
\label{normalization2}\|v_2^{N,\eps}\|_{L_2(Y\e)}=1\\\label{positivity2}
\langle v_2^{N,\eps} \rangle_{B}\geq 0
\end{gather}
Since the eigenspace which corresponds to $\lambda_1^{N,\eps}=0$
consists of constants then
\begin{gather}\label{ort}
(v_2^{N,\eps},1)_{L_2(Y\e)}=0
\end{gather}
and in view of the min-max principle we have
\begin{gather}\label{minN}
\lambda_2^{N,\eps}=\|\nabla
v_2^{N,\eps}\|^2_{L_2(Y\e)}=\minl_{0\not=v\in
H_\#^1(Y\e)}{\|\nabla v\|^2_{L_2(Y\e)}\over\|v\|^2_{L_2(Y\e)}}
\end{gather}
where $H_\#^1(Y\e)=\left\{v\in H^1(Y\e):\ (v\e,1)_{L_2(Y\e)}=0\right\}$

Below we will prove (see inequality (\ref{courant1}))  that
\begin{gather*}
\lambda_2^{N,\eps}\leq C\eps^2
\end{gather*}
Then in the same way as we obtain the estimates for
${v}_1^{D,\eps}$ in Lemma \ref{lm21} we derive the estimates for
${v}_2^{N,\eps}$:
\begin{gather}
\label{vn_est1} \|\nabla{v}_2^{N,\eps}\|^2_{L_2(Y\e)}\leq C\eps^2\\
\label{vn_est2} \|{v}_2^{N,\eps}-\langle
{v}_2^{N,\eps}\rangle_{F}\|^2_{L_2(F)}+\|{v}_2^{N,\eps}-\langle
{v}_2^{N,\eps}\rangle_{B}\|^2_{L_2(B)}\leq C\eps^2
\end{gather}
Using (\ref{vn_est1}), (\ref{vn_est2}) we obtain from
\eqref{normalization2} and \eqref{ort} that
\begin{gather*}
|\langle {v}_2^{N,\eps}\rangle_{F} |^2|F|+ |\langle
{v}_2^{N,\eps}\rangle_{B}|^2|B|\sim 1\quad
(\eps\to 0)\\
\langle {v}_2^{N,\eps}\rangle_{F} |F|+\langle
{v}_2^{N,\eps}\rangle_{B}|B|=0
\end{gather*}
and therefore taking into account  \eqref{positivity2} we get
\begin{gather}\label{262}
\langle {v}_2^{N,\eps}\rangle_{F}\sim -\sqrt{|B|/|F|},\ \langle
{v}_2^{N,\eps}\rangle_{B}\sim\sqrt{|F|/|B|}\quad (\eps\to 0)
\end{gather}

We construct an approximation $\mathbf{v}_2^{N,\eps}$ for the
eigenfunction $v_2^{N,\eps}$ by the following formula:
\begin{gather}\label{vbfn}
\mathbf{v}_2^{N,\eps}(x)=
\begin{cases}
\ds-\sqrt{|B|\over|F|}+{1\over 2
\sqrt{|B||F|}}\psi\left({x-x^0\over
d\e}\right)\varphi\left({|x-x^0|\over l}\right),&x\in F\\\ds
\sqrt{|F|\over|B|}- {1\over 2\sqrt{|B||F|}}\psi\left({x-x^0\over
d\e}\right)\varphi\left({|x-x^0|\over l}\right),&x\in B\cup  D\e
\end{cases}
\end{gather}
Recall that $\psi$ is a solution of \eqref{bvp-}-\eqref{bvp+},
$\varphi:\mathbb{R}\to \mathbb{R}$ is a twice-continuously
differentiable function satisfying \eqref{varphi}, $l$ is a constant satisfying \eqref{l}. One can easily show (taking into account the equality $|B|+|F|=1$)
that $\mathbf{v}_2^{N,\eps}\in \mathrm{dom}(\A^{N,\eps})$.

In the same way as we obtain the estimates for
$\mathbf{v}_1^{D,\eps}$ in Lemma \ref{lm21} we get the estimates
for $\mathbf{v}_2^{N,\eps}$:
\begin{gather}
\label{vbfn_est1} \|\nabla\mathbf{v}_2^{N,\eps}\|^2_{L_2(Y\e)}\sim
{4^{-1}|F|^{-1}|B|^{-1}\mathrm{cap}(T)d^{n-2}\eps^2}=
\mu\eps^2\quad (\eps\to 0)\\
\label{vbfn_est2}\|\Delta\mathbf{v}_2^{D,\eps}\|^2_{L_2(Y\e)}\leq C\eps^4\\
\label{vbfn_est3}
\left\|\mathbf{v}_2^{N,\eps}+\sqrt{|B|/|F|}\right\|^2_{L_2(F)}+
\left\|\mathbf{v}_2^{N,\eps}-\sqrt{|F|/|B|}\right\|^2_{L_2(B)}\leq
C\eps^2
\\\label{vbfn_est4}
\left|\langle\mathbf{v}_2^{N,\eps}\rangle_{Y\e}\right|^2\sim
0\quad (\eps\to 0)\\
\label{vbfn_est5} \|\mathbf{v}_2^{N,\eps}\|^2\sim 1\quad (\eps\to 0)
\end{gather}
Since $\mathbf{v}_2^{N,\eps}-\langle \mathbf{v}_2^{N,\eps}\rangle\in H^1_\#(Y\e)$ then it follows from (\ref{minN}), (\ref{vbfn_est1}), (\ref{vbfn_est4}), (\ref{vbfn_est5}) that
\begin{gather}\label{courant1}
\lambda^{N,\eps}_{2}={\|\nabla v_2^{N,\eps}\|^2_{L_2(Y\e)}}\leq
\ds{\|\nabla
\mathbf{v}_2^{N,\eps}\|^2_{L_2(Y\e)}\over\|\mathbf{v}_2^{N,\eps}-\langle
\mathbf{v}_2^{N,\eps}\rangle_{Y\e}\|_{L_{2}
(Y\e)}^2}\sim\mu\eps^2\quad (\eps\to 0)
\end{gather}

Now let us estimate the difference
\begin{gather}\label{differenceN}
w\e=v_2^{N,\eps}-\mathbf{v}_2^{N,\eps}
\end{gather} One has
\begin{multline*}
\|w\e\|^2_{L_2(Y\e)}\leq
2\left(\left\|v_2^{N,\eps}+\sqrt{|B|/|F|}\right\|^2_{L_2(F)}+
\left\|-\sqrt{|B|/|F|}-\mathbf{v}_2^{N,\eps}\right\|^2_{L_2(F)}\right)+\\+
2\left(\left\|v_2^{N,\eps}-\sqrt{|F|/|B|}\right\|^2_{L_2(B\e)}+
\left\|\sqrt{|F|/|B|}-\mathbf{v}_2^{N,\eps}\right\|^2_{L_2(B\e)}\right)
\end{multline*}
and thus in view of \eqref{vn_est2}, \eqref{262}, \eqref{vbfn_est3}  we get
\begin{gather}\label{wn_est1}
\liml_{\eps\to 0}\|w\e\|^2_{L_2(Y\e)}=0
\end{gather}

Substituting the equality $v_2^{N,\eps}=\mathbf{v}_2^{N,\eps}+w\e$  into (\ref{courant1})
and integrating by parts we get
\begin{gather*}
\|\nabla w\e\|^2_{L_2(Y\e)}\leq 2(\Delta
\mathbf{v}_2^{N,\eps},w\e)_{L_2(Y\e)}+\left({\|\nabla
\mathbf{v}_2^{N,\eps}\|^2_{L_2(Y\e)}\over\|\mathbf{v}_2^{N,\eps}-\langle
\mathbf{v}_2^{N,\eps}\rangle_{Y\e}\|_{L_{2}
(Y\e)}^2}-
\|\nabla\mathbf{v}_2^{N,\eps}\|^2_{L_2(Y\e)}\right)
\end{gather*}
and then in view of (\ref{vbfn_est1}), (\ref{vbfn_est2}),
\eqref{vbfn_est4}, (\ref{vbfn_est5}), (\ref{wn_est1})  we conclude
that
\begin{gather}\label{wn_est2}
\liml_{\eps\to 0}\eps^{-2}\|\nabla w\e\|^2_{L_2(Y\e)}=0
\end{gather}

Finally using (\ref{vbfn_est1}), (\ref{wn_est2})   we obtain
\begin{gather}\label{nlast}
\lambda^{N,\eps}_{2}\sim \ds{\|\nabla
\mathbf{v}_2^{N,\eps}\|^2_{L_2(Y\e)}}\sim\mu\eps^2\quad (\eps\to
0)
\end{gather}
The lemma is proved.
\end{proof}

\subsection{\label{ss23}Asymptotic behaviour of $\theta$-periodic eigenvalues}

To complete the proof of Theorem \ref{th1} we have study the
behaviour of the spectrum of the operator $\A^{\theta,\eps}$.

\begin{lemma}\label{lm24}For each $\theta\in \mathbb{T}^n$ and $k\in \mathbb{N}$ one has
\begin{gather}
\label{conv3}\liml_{\eps\to
0}\lambda_k^{\theta,\eps}=\lambda^\theta_k
\end{gather}
Furthermore
\begin{gather}\label{conv4} 
\lambda^{\theta,\eps}_2\sim \mu\eps^2\text{\quad if
}\theta=(1,1,\dots,1)
\\\label{conv5} \lambda^{\theta,\eps}_1\sim \sigma\eps^2\text{\quad if
}\theta\not=(1,1,\dots,1)
\end{gather}
\end{lemma}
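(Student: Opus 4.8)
The plan is to prove Lemma \ref{lm24} by combining the abstract convergence scheme already used for Lemmas \ref{lm21} and \ref{lm23} with the sharp asymptotics obtained there. For the convergence \eqref{conv3} I would apply the Iosifyan et al. theorem exactly as in the proof of \eqref{conv1}, now with $\mathcal{H}\e=L_2(Y\e)$, $\mathcal{L}\e=(\A^{\theta,\eps}+\mathrm{I})^{-1}$, $\mathcal{L}^0=(\A^\theta+\mathrm{I})^{-1}$, and $R\e$ the same extension-by-restriction operator \eqref{Reps}. The only change is that the test space $W$ must consist of pairs $(w_F,w_B)$ with $w_F$ satisfying the $\theta$-periodic condition \eqref{theta1} on $\partial Y$ (rather than the Dirichlet or free condition) and with supports avoiding $x^0$; density of such $W$ in $H^1_\theta(F)\oplus H^1(B)$ together with the compact embedding $H^1_\theta(F)\oplus H^1(B)\hookrightarrow \mathcal{H}^0$ gives conditions $C_3$ and $C_4$ verbatim. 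Passing to the limit in the integral identity then identifies the limit function as a solution of $-\Delta^\theta_F v_F+v_F=f_F$, $-\Delta_B v_B+v_B=f_B$, i.e. $v=\mathcal{L}^0 f$, and the eigenvalue convergence follows.

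For the refined asymptotics \eqref{conv4}--\eqref{conv5} I would exploit the enclosure \eqref{enclosure} together with Lemmas \ref{lm21} and \ref{lm23}. Consider first $\theta\neq(1,1,\dots,1)$. By \eqref{lambdaT2} we have $\lambda_1^\theta=0<\lambda_2^\theta$, so by \eqref{conv3} the first $\theta$-periodic eigenvalue tends to $0$ while $\lambda_2^{\theta,\eps}$ stays bounded away from $0$; thus for small $\eps$ the lowest eigenvalue $\lambda_1^{\theta,\eps}$ is the relevant small one. The squeeze \eqref{enclosure} gives $\lambda_1^{N,\eps}\le \lambda_1^{\theta,\eps}\le\lambda_1^{D,\eps}$. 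Since $\lambda_1^{N,\eps}=0$ this lower bound is useless, so instead I would bound $\lambda_1^{\theta,\eps}$ from above by $\lambda_1^{D,\eps}\sim\sigma\eps^2$ (Lemma \ref{lm21}) and from below by constructing the analogue of the orthogonality/Friedrichs/Poincar\'e estimates: the eigenfunction $v_1^{\theta,\eps}$, suitably normalized, must have mean values over $F$ and $B$ that are essentially determined, and a direct energy lower bound using the capacity of the removed hole (the only place where the energy concentrates) yields $\lambda_1^{\theta,\eps}\ge(1-o(1))\sigma\eps^2$. Concretely I would repeat, almost word for word, the construction of the trial function $\mathbf v_1^{D,\eps}$ from \eqref{hat_v} — which automatically satisfies the $\theta$-periodic condition since it vanishes near $\partial Y$ — to get the upper bound, and then estimate the difference $w\e=v_1^{\theta,\eps}-\mathbf v_1^{\theta,\eps}$ exactly as in \eqref{w_est1}--\eqref{w_est2} to upgrade it to an equivalence.

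For $\theta=(1,1,\dots,1)$ the situation mirrors the Neumann case: by \eqref{lambdaT1} we have $\lambda_1^\theta=\lambda_2^\theta=0<\lambda_3^\theta$, and the eigenspace for the zero limit is two-dimensional (constants on $F$ and constants on $B$, glued trivially). Hence both $\lambda_1^{\theta,\eps}$ and $\lambda_2^{\theta,\eps}$ are small, with $\lambda_1^{\theta,\eps}=0$ (constants are admissible since $\theta=(1,\dots,1)$ allows periodic constants) and $\lambda_2^{\theta,\eps}$ the genuinely interesting one. The enclosure gives $\lambda_2^{N,\eps}\le\lambda_2^{\theta,\eps}\le\lambda_2^{D,\eps}$; but $\lambda_2^{D,\eps}\to\lambda_2^D>0$ is too crude, so again I would argue directly. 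Since $\lambda_1^{\theta,\eps}=0$, the second eigenfunction $v_2^{\theta,\eps}$ is orthogonal to constants, i.e. $(v_2^{\theta,\eps},1)_{L_2(Y\e)}=0$, which is precisely the constraint defining $H^1_\#(Y\e)$ in \eqref{minN}. Therefore $\lambda_2^{\theta,\eps}$ admits the same variational characterization as $\lambda_2^{N,\eps}$, and the trial function $\mathbf v_2^{N,\eps}$ from \eqref{vbfn} is again admissible (it is built to vanish near $\partial Y$ so it is periodic, and it has the right mean-zero property); repeating the argument \eqref{courant1}--\eqref{nlast} verbatim gives $\lambda_2^{\theta,\eps}\sim\mu\eps^2$.

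The main obstacle is the lower bound in \eqref{conv5} when $\theta\neq(1,\dots,1)$: the enclosure \eqref{enclosure} only delivers the trivial bound $0\le\lambda_1^{\theta,\eps}$, so one genuinely has to reconstruct the Friedrichs--Poincar\'e--capacity machinery of Lemma \ref{lm21} in the $\theta$-periodic setting rather than just quote the Dirichlet/Neumann lemmas. The key technical point is that, because $\theta\neq(1,\dots,1)$, a Friedrichs-type inequality $\|u\|_{L_2(F)}\le C\|\nabla u\|_{L_2(F)}$ holds on $H^1_\theta(F)$ (there are no nonzero $\theta$-periodic constants), with $C$ uniform in $\eps$; granting this, the eigenfunction's $L_2(F)$-mass is controlled by its Dirichlet energy, forcing the mass into $B$ where the function is nearly constant, and the cost of matching a near-constant on $B$ to a small value on $F$ across the shrinking hole is exactly $\mathrm{cap}(T)d^{n-2}/(4|B|)\cdot\eps^2=\sigma\eps^2$ to leading order. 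Everything else is a routine transcription of the two preceding lemmas.
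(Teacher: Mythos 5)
Your proposal is essentially the paper's own proof: \eqref{conv3} via the Iosifyan--Olejnik--Shamaev theorem with the resolvent of $\A^{\theta,\eps}$, \eqref{conv4} by testing with $\mathbf{v}_2^{N,\eps}$ (admissible because it is \emph{constant} near $\partial Y$ --- not zero, as you write, but constancy is exactly what $\theta=(1,\dots,1)$ requires) and transcribing the Neumann argument, and \eqref{conv5} by testing with $\mathbf{v}_1^{D,\eps}$ and rerunning the Dirichlet argument once a Friedrichs-type inequality on $F$ is available for the $\theta$-quasi-periodic eigenfunction. The one genuine point of divergence is precisely that inequality, which you correctly isolate as the crux but only assert (``granting this''), justified by the absence of nonzero $\theta$-periodic constants. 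The paper instead proves it quantitatively: Lemma \ref{lm24+} bounds $|\langle v\rangle_{S_l^{\pm}}-\langle v\rangle_F|^2$ by $C\|\nabla v\|^2_{L_2(F)}$ (using the extension operator $\Pi$ and a lemma from \cite{Khrab2}), and then the quasi-periodicity relation $\langle v\rangle_{S_l^+}=\theta_l\langle v\rangle_{S_l^-}$ with $\theta_l\neq1$ yields $|\langle v_1^{\theta,\eps}\rangle_F|^2\leq C\|\nabla v_1^{\theta,\eps}\|^2_{L_2(F)}$, hence \eqref{ineq5} by Poincar\'e. Your softer route can indeed be closed by a standard compactness/contradiction argument on the fixed, $\eps$-independent domain $F$ (Rellich plus continuity of the trace identifies any limiting constant as $0$ when $\theta_l\neq1$), so nothing fails; the trade-off is that the paper's argument is explicit and exhibits the constant's dependence on $|1-\theta_l|^{-2}$, while yours is shorter but nonconstructive --- harmless here since $\theta$ is fixed in the lemma and only $\theta=\pm(1,\dots,1)$ enter the proof of Theorem \ref{th1}. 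Two further cosmetic slips: the difference you estimate should read $w\e=v_1^{\theta,\eps}-\mathbf{v}_1^{D,\eps}$, and for $\theta=(1,\dots,1)$ the second eigenvalue is characterized by minimizing over $\theta$-periodic mean-zero functions, a \emph{subspace} of $H^1_\#(Y\e)$, not literally the ``same'' characterization as \eqref{minN}; neither affects the argument, since the trial function lies in the smaller space and the lower bound $\lambda_2^{N,\eps}\leq\lambda_2^{\theta,\eps}$ comes from \eqref{enclosure}.
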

\begin{proof}
The proof of \eqref{conv3} is similar to the proof of
\eqref{conv1}.

The proof of \eqref{conv4} is similar to the proof of
\eqref{conv2+}. Namely, we approximate the eigenfunction
${v}_2^{\theta,\eps}$ of $\A^{\theta,\eps}$ that corresponds to
$\lambda_2^{\theta,\eps}$ and satisfies $\|v_2^{\theta,\eps}\|_{L_2(Y\e)}=1$, $\langle v_2^{\theta,\eps} \rangle_{B}\geq 0$ by the function
$\mathbf{v}_2^{N,\eps}$ \eqref{vbfn} (since
$\mathbf{v}_2^{N,\eps}$ is constant in the vicinity of $\partial Y$ then
it satisfies \eqref{theta1}-\eqref{theta2} with $\theta=(1,...,1)$). The
justification of the asymptotic equality
\begin{gather*}
\lambda^{\theta,\eps}_{2}\sim\ds{\|\nabla
\mathbf{v}_2^{N,\eps}\|^2_{L_2(Y\e)}},\ \theta=(1,\dots,1)
\end{gather*}
is completely similar to the proof of \eqref{nlast}.

Now, we focus on the proof of \eqref{conv5}. Let
$v_1^{\theta,\eps}$ be the eigenfunction of $\A^{\theta,\eps}$
that corresponds to $\lambda_1^{\theta,\eps}$ and satisfies $
\|v_1^{\theta,\eps}\|_{L_2(Y\e)}=1,\ \langle v_1^{\theta,\eps}
\rangle_{B}\geq 0 $. Since $\theta\not=(1,\dots,1)$ then there
exists $l\in \{1,\dots,n\}$ such that
\begin{gather}\label{thetal}
u(x+ e_l)=\theta_l u(x)\text{ for
}x=\underset{^{\overset{\qquad\qquad\uparrow}{\qquad\qquad
l\text{-th place}}\qquad }}{(x_1,x_2,\dots,-{1/2},\dots,x_n)},\
\text{ where } \theta_l\not= 1
\end{gather}
We denote by $S_l^{\pm}$ the faces of $Y$ which are orthogonal to
the axis $x_l$ that is
$$S_l^\pm=\left\{x\in \partial Y: x_l={\pm}{1\over 2}\right\}$$

We need an additional estimate

\begin{lemma}\label{lm24+}
For any $v\in H^1(F)$ the following inequality holds:
\begin{gather}\label{ineq1}
\left|\langle v\rangle_{S_l^{\pm}}-\langle v\rangle_F\right|^2\leq
C\|\nabla v\|^2_{L_2(F)}
\end{gather}
(here $\langle v \rangle_{S_l^{\pm}}=|S_l^{\pm}|^{-1}\intl_{S_l^{\pm}} v(x)dS_x$, where $dS_x$ is the volume (area) form on $S_l^{\pm}$, $|S_l^{\pm}|=\intl_{S_l^{\pm}} dS_x$).
\end{lemma}

\begin{proof} Let $v$ be an arbitrary function from
$C^\infty(\overline{F})$. One has
\begin{gather}\label{NL}
v(x)-v(x+\alpha e_l)=-\intl_0^\a {d v\over dt} (x+e_l t)dt
\end{gather}
where $x\in S_l^-$, $0<\alpha<(1-b)/2$ (and therefore $x+\alpha
e_l\in F$). 
We denote $\widehat F=\left\{x\in F: -{1/2}<x_n<-b/2\right\}$.  Integrating equality \eqref{NL} by 
$x_1$, $x_2$, ..., $x_{l-1},\ x_{l+1}$, ..., $x_n$  from $-1/2$
to $1/2$ and by $\alpha$ from $0$ to $(1-b)/2$, dividing by $(1-b)/2$ and squaring
 we get
\begin{multline}\label{ineq2}
\left|\langle v\rangle_{S_l^-} -\langle
v\rangle_{\widehat{F}}\right|^2=\\=\left|\intl_{-1/2}^{1/2}\cdot\cdot\cdot \intl_{-1/2}^{1/2}\intl_0^{(1-b)/2}\left(\intl_0^{\alpha}{dv\over dt}(x+e_lt)dt\right) d\a dx_1\dots dx_{l-1}dx_{l+1}\dots dx_n\right|^2
\leq\\\leq C\|\nabla v\|^2_{L_2(F)}
\end{multline}
The
fulfilment of inequality \eqref{ineq2} for $v\in H^1(F)$ follows
from the standard embedding and trace theorems.

It is well-known (see, e.g, \cite[Chapter 4]{March}) that the operator
$\Pi: H^1(F)\to H^1(Y)$ exists such that for an arbitrary $v\in
H^1(F)$ one has
\begin{gather}\label{pi}
\left.\Pi v\right|_F=v,\quad \|\Pi v\|_{H^1(Y)}\leq
C\|v\|_{H^1(F)}
\end{gather}
We have the estimate which follows directly from \cite[Lemma 3.1]{Khrab2}
\begin{gather}\label{ineq3}
\left|\langle v\rangle_{\widehat F}-\langle
v\rangle_F\right|^2\leq C\|\nabla \Pi v\|^2_{L_2(Y)}
\end{gather}
Then inequality \eqref{ineq1} (with $S_l^-$) follows from
\eqref{ineq2}-\eqref{ineq3}. The proof of \eqref{ineq1} with $S_l^+$ is 
similar. Lemma \ref{lm24+} is proved.
\end{proof}

Now using Lemma \ref{lm24+} and \eqref{thetal} we get
\begin{multline}\label{ineq4}
\left|\langle
v_1^{\theta,\eps}\rangle_F\right|^2=\left|1-\theta_l\right|^{-2}\left|\langle
v_1^{\theta,\eps}\rangle_F-\theta_l\langle
v_1^{\theta,\eps}\rangle_F\right|^2\leq\\\leq
2\left|1-\theta_l\right|^{-2}\left(\left|\langle
v_1^{\theta,\eps}\rangle_F-\langle
v_1^{\theta,\eps}\rangle_{S_{+}}\right|^2+ \theta_l^2\left|\langle
v_1^{\theta,\eps}\rangle_{S_{-}}-\langle
v_1^{\theta,\eps}\rangle_F\right|^2\right)\leq C\|\nabla
v_1^{\theta,\eps}\|^2_{L_2(F)}
\end{multline}
It follows from  \eqref{ineq4} and the Poincar\'{e} inequality
that
\begin{gather}\label{ineq5}
\|v_1^{\theta,\eps}\|^2_{L_2(F)}=\|v_1^{\theta,\eps}-\langle
v_1^{\theta,\eps}\rangle_F\|^2_{L_2(F)}+\left|\langle
v_1^{\theta,\eps}\rangle_F\right|^2\cdot |F|\leq C\|\nabla
v_1^{\theta,\eps}\|^2_{L_2(F)}
\end{gather}
Thus similarly to the Dirichlet eigenfunction $v_1^{D,\eps}$ (see
\eqref{v_est1}) the function $v_1^{\theta,\eps}$ satisfies the
Friedrichs inequality in $F$ (although $v_1^{\theta,\eps}\not= 0$ on $\partial Y$!). As for the rest the proof of
\eqref{conv5} repeats word-by-word the proof of \eqref{conv1+}: we
approximate the eigenfunction ${v}_1^{\theta,\eps}$ by the
function $\mathbf{v}_1^{D,\eps}$ \eqref{hat_v} (since
$\mathbf{v}_1^{D,\eps}=0$ in the vicinity of $\partial Y$ then it
satisfies (\ref{theta1})-(\ref{theta2}) with an arbitrary $\theta$) and then prove
the asymptotic equality
\begin{gather}\label{asy}
\lambda^{\theta,\eps}_{1}\sim\ds{\|\nabla
\mathbf{v}_1^{D,\eps}\|^2_{L_2(Y\e)}},\ \theta\not=(1,\dots,1)
\end{gather}
The proof of \eqref{asy} (taking into account \eqref{ineq5}) is completely similar to the proof of
\eqref{dlast}.

Lemma \ref{lm24} is proved.
\end{proof}

\subsection{End of the proof of Theorem \ref{th1}} It follows from
\eqref{AA} and \eqref{repres1} that
\begin{gather}\label{sp}
\sigma(\mathcal{A}\e)=\cupl_{k=1}^\infty [a_k^-(\eps),a_k^+(\eps)]
\end{gather}
where the compact intervals $[a_k^-(\eps),a_k^+(\eps)]$ are
defined by
\begin{gather}\label{interval}
[a_k^-(\eps),a_k^+(\eps)]= \cupl_{\theta\in \mathbb{T}^n}
\left\{\eps^{-2}\lambda_k^{\theta,\eps}\right\}
\end{gather}

We denote $\theta_1=(1,1,\dots,1)$, $\theta_2=-\theta_1$. It
follows from \eqref{enclosure} and \eqref{interval} that
\begin{gather}\label{double1}
\eps^{-2}\lambda_k^{N,\eps}\leq a_k^-(\eps)\leq
\eps^{-2}\lambda_k^{\theta_{1},\eps}\\\label{double2}
\eps^{-2}\lambda_k^{\theta_{2},\eps}\leq a_k^+(\eps)\leq
\eps^{-2}\lambda_k^{D,\eps}
\end{gather}
Obviously if $k=1$ then the left and right-hand-sides of
\eqref{double1} are equal to zero. It follows from \eqref{conv2+},
\eqref{conv4} that in the case $k=2$ they both converge to $\mu$
as $\eps\to 0$, while if $k\geq 3$ they converge to infinity in
view of \eqref{lambdaN}, \eqref{lambdaT1}, \eqref{conv2},
\eqref{conv3}. Thus
\begin{gather}\label{a-}
a_1^-(\eps)=0,\quad \liml_{\eps\to 0}a_2^-(\eps)=\mu,\quad
\liml_{\eps\to 0}a_k^-(\eps)=\infty,\ k=3,4,5\dots
\end{gather}
Similarly in view of \eqref{lambdaD}, \eqref{lambdaT2},
\eqref{conv1}, \eqref{conv1+}, \eqref{conv3}, \eqref{conv5} one
has
\begin{gather}\label{a+}
\liml_{\eps\to 0}a_1^+(\eps)=\sigma,\quad  \liml_{\eps\to
0}a_k^+(\eps)=\infty,\ k=2,3,4\dots
\end{gather}
Then \eqref{main1}-\eqref{main2} follow directly from \eqref{sp},
\eqref{a-}-\eqref{a+}. Theorem \ref{th1} is proved.

\section{\label{sec3} Application to the theory of $2D$ photonic crystals}

In this section we apply the results obtained above to the theory of
$2D$ photonic crystals.

\textit{Photonic crystal} is a dielectric medium with periodic structure
whose main property is that the electromagnetic waves of a certain
frequency cannot propagate in it without attenuation. From the
mathematical point of view it means that the corresponding Maxwell
operator has gaps in its spectrum. We refer to
\cite{Joannopoulos,Dorfler,Kuchment_PC} for more details.

It is known that if the crystal is periodic in two directions and
homogeneous with respect to the third one (so-called $2D$ photonic
crystals) then the analysis of the Maxwell operator reduces to the
analysis of scalar elliptic operators. In the case when dielectric
medium occupies the entire space the rigorous justification of this
reduction was carried out in \cite{Figotin2} (in this work
spectral gaps open up due to a high contrast electric permittivity).
In the current work we consider the dielectric medium with a
periodic family of perfectly conducting trap-like screens embedded
into it. It means that we should supplement the Maxwell equations
by suitable boundary conditions on these screens. In this case the
analysis of the Maxwell operator reduces to the analysis of the
Neumann or Dirichlet Laplacians in a $2$-dimensional domain which
is a cross-section of the crystal along periodicity plane.

In this work we derive this reduction on a formal level of rigour. Then using
Theorem \ref{th1} and Lemma \ref{lmApp} below we open up gaps in
the spectrum of the Maxwell operator.

Let us introduce the following sets in $\mathbb{R}^3$:
$$\mathbf{S}_i\e=\left\{(x_1,x_2,z):\ x=(x_1,x_2)\in S_i\e,\ z\in\mathbb{R}\right\},\ \mathbf{\Omega}\e=\left\{(x_1,x_2,z):\ x=(x_1,x_2)\in \Omega\e,\ z\in\mathbb{R}\right\}$$
where  $S_i\e$ and $\Omega\e$ belong to $\mathbb{R}^2$ and are
defined by (\ref{S}) and (\ref{Omega}) correspondingly. We suppose
that $\mathbf{\Omega}\e$ is occupied by a dielectric medium
while the union of  the screens $\mathbf{S}_i\e$ is occupied by a
perfectly conducting material. It is supposed that the electric
permittivity and the magnetic permeability of the material
occupying $\Omega\e$ are equal to $1$. Then the propagation of
electro-magnetic waves in $\mathbf{\Omega}\e$ is described by the
Maxwell equations
\begin{gather*}
\mathrm{curl}\mathbf{E} =-{\partial \mathbf{H}\over\partial t},\
\mathrm{curl}\mathbf{H} ={\partial \mathbf{E}\over\partial t},\
\mathrm{div}\mathbf{H} = \mathrm{div}\mathbf{E} = 0
\end{gather*}
supplemented by the following boundary conditions on $\cupl_{i\in\mathbb{Z}^2} \mathbf{S}_i\e$:
\begin{gather*}
\mathbf{E}_\tau=0,\ \mathbf{H}_\nu=0
\end{gather*}
Here $\mathbf{E}$ and $\mathbf{H}$ are the electric and magnetic fields, $\mathbf{E}_\tau$ and $\mathbf{H}_\nu$ are the tangential and normal components of $\mathbf{E}$ and $\mathbf{H}$ correspondingly.

Looking for monochromatic waves $\mathbf{E}(x,t) = e^{i\omega t}\E(x),\ \mathbf{H}(x; t) = e^{i\omega t}\H(x)$
we obtain
\begin{gather}\label{Mu}
\M\e\U=\omega \U,\ \mathrm{div}{\H} = \mathrm{div}{\E} = 0\text{
in }\mathbf{\Omega}\e,\quad {\E}_\tau=0,\ {\H}_\nu=0\text{ on }\cupl_{i\in
\mathbb{Z}^n} \mathbf{S}_i\e
\end{gather}
where $\U=(\E,\H)^T$, $\M\e$ is the Maxwell operator:
$$\M\e=i\left(\begin{matrix}
0&  -\mathrm{curl} \\\mathrm{curl}&0
\end{matrix}\right)$$
(the subscript $\eps$ emphasizes that this operator acts on
functions defined in $\Omega\e$). For more precise  definition of the Maxwell operator we refer to \cite{BirmanSolo} (the theory developed in this paper covers, in particular, domains with a "screen-like" boundary).

We are interested on the waves propagated along the plane $z=0$,
i.e. when $(\E,\H)$ depends on $x=(x_1,x_2)$ only:
\begin{gather}\label{x1x2}
\E=\left(E_1(x_1,x_2),E_2(x_1,x_2),E_3(x_1,x_2)\right),\
\H=\left(H_1(x_1,x_2),H_2(x_1,x_2),H_3(x_1,x_2)\right)
\end{gather}
Also we suppose that $\U$ is a Bloch
wave that is
\begin{gather}\label{Bloch}
\exists\theta=(\theta_1,\dots,\theta_n)\in \mathbb{T}^n:\ \U(x+i)=\theta^i\U(x),\ i\in
\mathbb{Z}^n
\end{gather}
(here $\theta^i=\prod\limits_{k=1}^n (\theta_k)^{i_k}$). We
call the set of $\omega$ for which there is $\U=(\E,\H)\not=0$
satisfying \eqref{Mu}-\eqref{Bloch} a \textit{(Bloch) spectrum of
the operator $\mathcal{M}\e$}. We denote it by $\sigma(\M\e)$.

Using (\ref{x1x2}) we can easily rewrite the equality
$\M\e\U=\omega \U$ as
\begin{gather}\label{m_inter2}
-i{\partial H_3\over \partial x_2}=\omega E_1,\quad
i{\partial H_3\over \partial x_1}=\omega E_2,\quad -i\left({\partial H_2\over\partial x_1}-{\partial H_1\over\partial x_2}\right)=\omega E_3\\
\label{m_inter1} i{\partial E_3\over \partial x_2}=\omega
H_1,\quad -i{\partial E_3\over \partial x_1}=\omega H_2,\quad
i\left({\partial E_2\over\partial x_1}-{\partial E_1\over\partial
x_2}\right)=\omega H_3
\end{gather}

Let us show that
\begin{gather}\label{impl}
\omega\in \sigma(\M\e) \Longleftrightarrow
\omega^2\in\sigma(\mathcal{A}_0\e)\cup\sigma(\mathcal{A}\e)
\end{gather}
where $\mathcal{A}\e_0$ and $\mathcal{A}\e$ are, correspondingly,
the Dirichlet and Neumann Laplacians in $\Omega\e$. 
Suppose that $\omega \in \sigma(\M\e)$. If $\omega=0$ then $\omega^2\in\sigma(\mathcal{A}\e)$ otherwise 
we express $H_1$ and $H_2$
(\textit{resp.} $E_1$ and $E_2$) from the first two equalities in
\eqref{m_inter1} (\textit{resp.} (\ref{m_inter2})) and plug them
into the third equality in \eqref{m_inter2} (\textit{resp.}
(\ref{m_inter1})). As a result we get the following equalities on $\Omega\e$:
\begin{gather}\label{eqEH3}
-\Delta E_3=\omega^2 E_3\quad  \text{(\textit{resp. }}-\Delta H_3=\omega^2 H_3\text{)}
\end{gather}
Let  $n=(n_1,n_2,0)$ be the unit normal to $\cup \mathbf{S}_i\e$. Since $\E_\tau=0$ then $E|| n$ and therefore $\E\perp (0,0,1)$ and $\E\perp (-n_2,n_1,0)$. Taking this and the first two equalities in \eqref{m_inter2} into account we obtain the following boundary conditions for $E_3$ and $H_3$ on
$\cupl_{i\in\mathbb{Z}^n}S_i\e$:
\begin{gather}\label{eqEH3_bc}
E_3=0,\quad {\partial H_3\over\partial n}={\partial
H_3\over\partial x_1}n_1+{\partial H_3\over\partial
x_2}n_2=i\omega^{-1}(-E_2 n_1+
\lambda^{\theta,\eps}_2\sim \mu\eps^2\text{\quad if
}\theta=(1,1,\dots,1))=0
\end{gather}
Furthermore it follows from \eqref{m_inter2}-\eqref{m_inter1} that 
\begin{gather}\label{nonzero}
\mathrm{U}\not=0\Longleftrightarrow E_3\not=0\text{ or }H_3\not=0
\end{gather}
It follows from \eqref{eqEH3}-\eqref{nonzero} and \eqref{Bloch}
that $\omega^2$ belongs to the spectrum of either the
Dirichlet or Neumann Laplacian in $\Omega\e$. The converse implication in \eqref{impl} is proved similarly.

\begin{remark}{\rm
Above we have dealt with the space 
$$J=\big\{(\E,\H):\ \mathrm{div}\E=\mathrm{div}\H=0\text{ on }\mathbf{\Omega}\e,\ \E_\tau=\H_\mu=0\text{ on }\cup \mathbf{S}_i\e\text{ and }\eqref{x1x2}-\eqref{Bloch}\text{ holds}\big\}$$
We introduce the following subspaces 
\begin{gather*}
J_E=\{(\E,\H)\in J:\ E_1=E_2=H_3=0\},\quad J_H=\{(\E,\H)\in J:\ H_1=H_2=E_3=0\}
\end{gather*}
Their elements are usually called $E$- and $H$-polarized waves. These subspaces are $L_2$-orthogonal and it is easy to see that each $\U\in J$ can be represented in unique way as
$$\U=\U_E+\U_H\text{, where }\U_E\in J_E,\ \U_H\in J_H$$
Moreover $J_E$ and $J_H$ are invariant subspaces of $\M\e$. Thus $\sigma(\M\e)$ is a union of $\sigma(\M\e|_{J_E})$ ($E$-subspectrum) and $\sigma(\M\e|_{J_H})$ ($H$-subspectrum). We have just shown that the set of squares of points from $E$-subspectrum is just the spectrum of the Dirichlet Laplacian, while  the the set of squares of points from $H$-subspectrum is the spectrum of the Neumann Laplacian.}
\end{remark}

The spectrum of $\mathcal{A}\e$ has been
studied above (Theorem \ref{th1}). Now, we describe the spectrum
of the Dirichlet Laplacian in $\Omega\e$. We define it
via a sesquilinear form $\eta_0\e$ which is defined by formula
\eqref{form1} and the definitional domain
$\mathrm{dom}(\eta_0\e)=H^1_0(\Omega\e)$ (here $H^1_0(\Omega\e)$
is a closure in $H^1(\Omega\e)$ of the set of compactly supported
in $\Omega\e$ functions). The Diriclet Laplacian in $\Omega\e$ (we denote it
$\mathcal{A}_0\e$) is
the operator which is generated by this form, i.e. \eqref{eta-a} holds (with $\mathcal{A}_0\e$, $\eta_0\e$ instead of $\mathcal{A}\e$, $\eta\e$). It turns out that its spectrum goes to infinity 
as $\eps\to 0$, namely the following lemma holds true.

\begin{lemma}\label{lmApp} One has:
\begin{gather}\label{dir}
\min\{\lambda: \lambda\in
\sigma(\mathcal{A}_0\e)\}\underset{{\eps\to 0}}\to \infty
\end{gather}
\end{lemma}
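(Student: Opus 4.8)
The plan is to show that the lowest eigenvalue of the corresponding $\theta$-periodic (or simply Dirichlet) problem on the rescaled period cell $\eps^{-1}\Omega\e$ diverges, and then invoke the Floquet–Bloch decomposition exactly as in Section \ref{ss21}. By the analogue of \eqref{AA}–\eqref{repres1} for the Dirichlet Laplacian we have $\sigma(\mathcal A_0\e)=\eps^{-2}\bigcup_{\theta\in\mathbb T^n}\{\nu_k^{\theta,\eps}\}_{k\in\mathbb N}$, where $\nu_k^{\theta,\eps}$ are the eigenvalues of the Laplacian in $Y\e$ with Neumann conditions on $S\e$ and $\theta$-periodic conditions on $\partial Y$, \emph{plus} a Dirichlet condition on $\partial Y\e\setminus S\e$ coming from the fact that functions in $H^1_0(\Omega\e)$ vanish on the screens. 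Wait — more precisely, $H^1_0(\Omega\e)$ consists of functions vanishing on $S\e$ (since the screens have nonempty interior relative to $\partial B$, they carry a genuine Dirichlet trace), so the relevant period-cell operator has Dirichlet conditions on $S\e$ and $\theta$-periodic conditions on $\partial Y$. Thus it suffices to prove
\begin{gather*}
\liml_{\eps\to 0}\ \minl_{\theta\in\mathbb T^n}\ \nu_1^{\theta,\eps}\eps^{-2}=\infty,
\end{gather*}
equivalently $\nu_1^{N,\eps}/\eps^2\to\infty$, where $\nu_1^{N,\eps}$ is the first eigenvalue with Neumann conditions on $\partial Y$ and Dirichlet conditions on $S\e$ (this is the smallest among all $\theta$ by the min-max monotonicity, as in \eqref{enclosure}).

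Next I would estimate $\nu_1^{N,\eps}$ from below by a Poincaré–type inequality. The key geometric fact is that on the period cell $Y\e=Y\setminus S\e$ the complement of the screen $S\e$ inside $\partial B$ is only the tiny hole $D\e$, whose $(n-1)$-measure is $O((d\e)^{n-1})=o(1)$ (and in fact exponentially small when $n=2$). A function $v\in H^1(Y\e)$ with $v=0$ on $S\e$ is forced to be small: I would prove
\begin{gather*}
\|v\|^2_{L_2(Y\e)}\leq C\,\delta(\eps)\,\|\nabla v\|^2_{L_2(Y\e)},\qquad \delta(\eps)\to 0,
\end{gather*}
so that $\nu_1^{N,\eps}\geq C^{-1}\delta(\eps)^{-1}$, which dominates $\eps^2$. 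To get such an inequality one splits $v$ into its parts $v_F$ on $F$ and $v_B$ on $B\cup D\e$. On $F$ the function vanishes on all of $\partial B$ except $D\e$; a trace/Friedrichs argument (as in \eqref{v_est1}) controls $\|v_F\|_{L_2(F)}$ by $\|\nabla v\|_{L_2(F)}$ up to the contribution of the boundary values on $D\e$, and the latter is estimated through $D\e$ using its vanishing small measure and a rescaled trace inequality near $x^0$. On $B$, $v_B-\langle v_B\rangle_B$ is controlled by the gradient (Poincaré), and $|\langle v_B\rangle_B|$ is in turn controlled because on $\partial B\setminus D\e$ the function $v_B$ matches $v_F$ (which is small) through the screen $S\e$ — here one uses that $v$ is continuous across $S\e$? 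No: $S\e$ is a screen, so there is no matching; instead $v_B$ vanishes on $S\e$ directly (Dirichlet on the screen from both sides), so $v_B$ itself satisfies a Friedrichs inequality on $B$ relative to the large portion $\partial B\setminus D\e$ of its boundary, with the capacity-type correction from $D\e$ being negligible. This gives $\|v_B\|_{L_2(B)}\leq C\delta(\eps)\|\nabla v_B\|^2_{L_2(B)}$, completing the bound.

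The main obstacle is making the constant $\delta(\eps)$ uniform and genuinely $o(1)$: one must show that a harmonic-type function on $B$ that is zero on $\partial B\setminus D\e$ but free on $D\e$ cannot have large $L_2$-norm relative to its Dirichlet energy, i.e. that the "leak" through the small hole $D\e$ contributes a capacity term vanishing as $\eps\to 0$. This is precisely the capacity estimate dual to the one used in Lemma \ref{lm21}: there the relevant quantity was $\mathrm{cap}(T)(d\e)^{n-2}\sim\sigma\eps^2$, which tends to $0$, so the hole is asymptotically "closed" from the Dirichlet point of view, forcing $\nu_1^{N,\eps}\to\infty$ after the $\eps^{-2}$ rescaling. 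Concretely I would use the cut-off of the capacitary potential $\psi((x-x^0)/d\e)$ from \eqref{hat_v} as a competitor to show the hole's contribution to the Poincaré constant is $O((d\e)^{n-2})=O(\eps^2)$ (resp.\ exponentially small for $n=2$), hence $\nu_1^{N,\eps}\ge c\,(d\e)^{2-n}\to\infty$ faster than $\eps^{-2}$ is needed. Once this lower bound is established, \eqref{dir} follows immediately from the Floquet–Bloch representation and $\sigma(\mathcal A_0\e)=\eps^{-2}\sigma(\A_0\e)$.
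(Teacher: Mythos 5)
Your overall route is workable and close in spirit to the paper's: in both cases the engine is a Friedrichs-type inequality exploiting the Dirichlet condition on the screens, with the divergence coming solely from the $\eps^{-2}$ scaling of the period cell. The paper does this without any Floquet--Bloch reduction: for $u\in\mathrm{dom}(\mathcal{A}_0\e)$ it writes the Friedrichs inequalities $\|u\|^2_{L_2(B_i\e)}\le C\eps^2\|\nabla u\|^2_{L_2(B_i\e)}$ and $\|u\|^2_{L_2(F_i\e)}\le C\eps^2\|\nabla u\|^2_{L_2(F_i\e)}$ directly on the scaled cells $B_i\e=\eps(B+i)$, $F_i\e=\eps(F+i)$ (the constant is uniform in $i$ and $\eps$ because $u$ vanishes on $S_i\e$, i.e.\ on all of the cell boundary except the shrinking hole), sums over $i\in\mathbb{Z}^n$, and concludes $(\mathcal{A}_0\e u,u)_{L_2(\Omega\e)}\ge C_1\eps^{-2}\|u\|^2_{L_2(\Omega\e)}$, so the ray $(-\infty,C_1\eps^{-2})$ lies in the resolvent set. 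Your Floquet--Bloch reduction together with $\nu_1^{N,\eps}\le\nu_1^{\theta,\eps}$ reaches the same point with more machinery, and your two self-corrections (Dirichlet on $S\e$ for the cell problem; no matching of $v_B$ and $v_F$ across the screen) are the right ones.

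However, the quantitative claim you place at the centre of your last two paragraphs is false: the Poincar\'e constant $\delta(\eps)$ does \emph{not} tend to $0$, and $\nu_1^{N,\eps}$ does \emph{not} diverge like $(d\e)^{2-n}$. Take $v$ equal to the first Dirichlet eigenfunction of $B$ on $B$ and $v\equiv 0$ on $F$; both traces vanish on $D\e$, so this function belongs to $H^1(Y\e)$, vanishes on $S\e$, and has Rayleigh quotient $\lambda_1^{\mathrm{Dir}}(B)$. Hence $\nu_1^{N,\eps}\le\lambda_1^{\mathrm{Dir}}(B)$ uniformly in $\eps$; in fact $\nu_1^{N,\eps}$ converges to the finite positive first eigenvalue of the decoupled limit problem (Dirichlet on $\partial B$, Neumann on $\partial Y$), not to infinity. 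So the capacity analysis you describe as the ``main obstacle'' cannot deliver $\delta(\eps)=o(1)$ --- but it is also unnecessary: your scheme only needs $\nu_1^{N,\eps}\ge c>0$ uniformly in $\eps$, i.e.\ Friedrichs constants merely \emph{bounded} in $\eps$, which is immediate since the Dirichlet portion $S\e$ of the boundary only grows as $\eps\to 0$. With that correction the factor $\eps^{-2}$ alone yields \eqref{dir}, exactly as in the paper's one-line estimate.
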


\begin{proof}
For $i\in \mathbb{Z}^n$ we denote $B_i\e=\eps(B+i)$,
$F_i\e=\eps(F+i)$. Let $u\in \mathrm{dom} \mathcal{A}\e_0$. Since
$u=0$ on $S_i\e$ one has the following Friedrichs inequalities:
\begin{gather*}
\|u\|^2_{L_2(B_i\e)}\leq C\eps^{2}\|\nabla
u\|_{L_2(B_i\e)}^2,\quad \|u\|^2_{L_2(F_i\e)}\leq
C\eps^{2}\|\nabla u\|_{L_2(F_i\e)}^2
\end{gather*}
Here $C$ is independent of $u$, $i$ and $\eps$. Summing up these
equalities by $i\in\mathbb{Z}^n$ and then integrating by parts we
get:
\begin{gather}\label{fri}
(\mathcal{A}\e_0 u,u)_{L_2(\Omega\e)}\geq C_1\eps^{-2}
\|u\|^2_{L_2(\Omega\e)}
\end{gather}
It follows from \eqref{fri} (see, e.g., \cite[Chapter 6]{Birman})
that the ray $(-\infty,C_1\eps^{-2})$ belongs to the resolvent set
of $\mathcal{A}\e_0$ that imply \eqref{dir}. The lemma is proved.
\end{proof}

Thus using Theorem \ref{th1}, Lemma \ref{lmApp} and
\eqref{impl} we conclude that the spectrum of the Maxwell operator $\M\e$ has
just two gaps in large finite intervals when $\eps$ is
small enough;  as $\eps\to 0$ these gaps converge to the intervals
$(-\sqrt{\sigma},-\sqrt{\mu})$ and $(\sqrt{\sigma},\sqrt{\mu})$.


\end{document}